\newcommand{\bc}{\begin{center}}
\newcommand{\ec}{\end{center}}
\newcommand{\be}{\begin{enumerate}}
\newcommand{\ee}{\end{enumerate}}
\newcommand{\beq}{\begin{equation}}
\newcommand{\eeq}{\end{equation}}
\newcommand{\bi}{\begin{itemize}}
\newcommand{\ei}{\end{itemize}}
\newcommand{\bd}{\begin{description}}
\newcommand{\ed}{\end{description}}
\newcommand{\ba}{\begin{array}}
\newcommand{\bea}{\begin{eqnarray*}}
\newcommand{\eea}{\end{eqnarray*}}
\newcommand{\ea}{\end{array}}
\newcommand{\bt}{\begin{tabular}}
\newcommand{\et}{\end{tabular}}
\newcommand{\bmi}{\begin{minipage}}
\newcommand{\emi}{\end{minipage}}
\newcommand{\M}{{\cal M}}
\newtheorem{thm}{Theorem}[section]
\newtheorem{lem}[subsection]{Lemma}
\newtheorem{cor}[subsection]{Corollary}
\newtheorem{pro}[subsection]{Proposition}
\newtheorem{exa}[subsection]{Example}
\newtheorem{defn}[subsection]{Definition}
\newtheorem{rem}[subsection]{Remark}
 \newtheorem{ques}[subsection]{Question}
\begin{document}

\bc {\bf\large On Linear Maps and Seed Sets of Beidleman Near-Vector Spaces }\\[3mm]
{\sc P. Djagba and A.L. Prins  }

\it\small
Department of Mathematics and Applied Mathematics,\\
Nelson Mandela University, South Africa\\

\rm e-mail: prudence@aims.ac.za, abrahamprinsie@yahoo.com 
\ec
 
\normalsize

\quotation{\small {\bf Abstract:} 
We studied linear mappings in Beidleman near-vector spaces and explored their matrix representations using $R$-bases of $R$-subgroups. Additionally, we developed algorithms for determining the seed number and seed sets of $R$-subgroups within finite-dimensional Beidleman near-vector spaces.
\small
{\it Keywords: Near-vector spaces, $R$-subgroup} \\
\normalsize

\textup{2010} \textit{MSC}: \textup{16Y30;12K05}

\section{Introduction}

Nearfields, first studied by Dickson \cite{dickson1905finite} in $1905$, found immediate applications in geometry. Despite their close resemblance to fields, the absence of one-sided distributive laws makes the study of nearfields challenging.

Nearfields, also known as skewfields or division rings, lack the distributive law on one side. Dickson's pioneering work in 1905 initiated their exploration, revealing connections to geometry and automata theory \cite{veblen1907, appl1980, automata}. Most finite nearfields are constructed by distorting multiplication in finite fields through Dickson's method, with seven exceptional examples 		\cite{zassenhauss1935}.  For a comprehensive overview, consult books by Pilz \cite{pilz2011near} and Meldrum \cite{meldrum1985near}, among others \cite{zassenhauss1935,ellerskarzel1964, wahling1987theorie,finitenearfield, djagba, djagbacenter}.  In $1966$, Beidleman introduced the concept of near-vector spaces over nearfields, employing nearring modules and the left distributive law \cite{beidleman1966near}. A different notion of near-vector spaces defined by Andr\'{e} in 1974, utilizing automorphisms, results in the right distributive law \cite{andre1974, Howell2008, dorfling2018decomposition, sanon2020}.

More recent contributions to the theory of Beidleman near-vector spaces were made by Djagba and Howell \cite{djagbathesis,djagbahowell18,djagbajan}. These contributions delve into subspaces and subgroups of near-vector spaces over nearfield notions like $R$-dimension, $R$-basis, seed set, and seed number of an $R$-subgroup were introduced. Due to the lack of distributivity, near-vector spaces exhibit more anomalous behavior compared to vector spaces over fields. An $R$-subgroup of a near-vector space is a subset closed under vector addition and vector-scalar multiplication. It can be generated by a set of vectors, with explicit procedures like 'Expanded Gaussian Elimination' \cite{djagbahowell18,djagbajan} characterizing $R$-subgroups generated by finite sets of vectors. This result implies that a near-vector space $R^m$ over a proper nearfield $R$ can be generated by fewer than $m$ vectors.

   This paper focuses on the earlier Beidleman definition. We continue to explore the subgroup structure of finite-dimensional Beidleman near-vector spaces, focusing on the canonical case of $R^m$. We derive matrix representations of linear and normal linear mappings between finite-dimensional Beidleman near-vector spaces. Unlike vector spaces, the set of linear mappings from near-vector spaces does not form a nearring. Finally, we present an explicit algorithm to determine the seed sets and seed number of $R$-subgroups.

\section{Preliminaries}
Let $R$ be a non-empty set.
\begin{defn}(\cite{meldrum1985near})
The triple $(R,+,\cdot)$ is a (left) nearring  if $(R,+)$ is a group,
 $(R,\cdot)$ is a semigroup, and  $a(b+c)= ab+ac$ for all $a,b,c \in R.$
\end{defn}

A nearfield is an algebraic structure similar to a skew-field, also known as a division ring. The key distinction is that it has only one of the two distributive laws.  
\begin{defn} (\cite{pilz2011near})
  Let $R$ be nearring. If $ \big ( R^*=R \setminus \{0 \}, \cdot \big )$ is a group then $(R,+, \cdot)$ is called nearfield.
\label{th:t8}
 \end{defn}
In this paper, we will utilize left nearfields and right nearring modules. Various mathematicians, including Dickson, Zassenhauss, Neumann, Karzel, and Zemmer, have independently demonstrated that the additive group of a nearfield is abelian.
 \begin{thm}(\cite{pilz2011near})
 The additive group of nearfield is abelian.
 \label{tt}
 \end{thm}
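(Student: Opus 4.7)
My plan is to reproduce the classical argument from Pilz's monograph (\cite{pilz2011near}), which uses only the single available distributive law $r(s+t)=rs+rt$ and splits into two cases according to the additive characteristic of $R$. As preliminaries I would record that $r\cdot 0 = 0$ and $r(-s) = -(rs)$ for all $r,s\in R$: the first follows by applying left distributivity to $r(0+0)=r\cdot 0 + r\cdot 0$ and cancelling; the second from $rs+r(-s)=r(s+(-s))=r\cdot 0=0$. If the characteristic is two, i.e.\ $1+1=0$, then for every $x\in R$ we have $x+x = x\cdot 1 + x\cdot 1 = x(1+1) = x\cdot 0 = 0$, so $(R,+)$ has exponent two. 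Any such group is abelian, because $-(a+b) = -b-a = b+a$ (using the general formula for the inverse of a sum together with $-a=a$, $-b=b$) while simultaneously $-(a+b) = a+b$, forcing $a+b = b+a$.

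If the characteristic is not two, then $1+1\neq 0$ is invertible in the nearfield. The pivotal identities, valid for any $a\in R$ and any $b\ne 0$, are
\[
b(1 + b^{-1}a) \;=\; b + a, \qquad b(b^{-1}a + 1) \;=\; a + b,
\]
each obtained by a single application of left distributivity together with $b\cdot b^{-1}=1$. Consequently, establishing additive commutativity reduces to proving the centrality statement $1+c = c+1$ for every $c\in R$: once this is known, the two displays yield $b+a = b(1+b^{-1}a) = b(b^{-1}a+1) = a+b$ whenever $b\ne 0$, and the case $b=0$ is trivial.

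The heart of the proof, and the main obstacle, is therefore showing that $1$ lies in the additive centre of $R$. The key structural ingredients are that each nonzero $r\in R$ acts on $(R,+)$ as an additive automorphism $L_r\colon x\mapsto rx$ (additivity is exactly left distributivity, and bijectivity follows because $L_{r^{-1}}$ is a two-sided inverse thanks to $R^{*}$ being a group), and that $1+1$ is now available as an invertible scalar against which one can test commutators. A short manipulation with $(1+1)^{-1}$, essentially comparing the two expansions of $(1+1)(a+b)$ against $(a+b)(1+1) = (a+b)+(a+b)$, forces $1+c = c+1$ for all $c\in R$ and closes the argument. The difficulty is genuine: without the invertibility of $1+1$ one cannot exploit this comparison, which is precisely why the characteristic-two case must be handled by the separate exponent-two argument above.
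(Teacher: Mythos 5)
Your characteristic-two case and your reduction are fine: $x+x=x(1+1)=x\cdot 0=0$ uses only the left distributive law, exponent-two groups are abelian, and the identities $b(1+b^{-1}a)=b+a$, $b(b^{-1}a+1)=a+b$ correctly show that additive commutativity would follow once $1+c=c+1$ holds for all $c$. But the step you call "a short manipulation with $(1+1)^{-1}$" is a genuine gap, and it sits exactly at the heart of the theorem. With only the left distributive law $r(s+t)=rs+rt$, the product $(1+1)(a+b)$ expands to $(1+1)a+(1+1)b$ and can be taken no further, since expanding $(1+1)a$ would require right distributivity, which a nearfield lacks; and $(a+b)(1+1)=(a+b)+(a+b)$ is a different product altogether, with no identity linking it to $(1+1)(a+b)$ because multiplication in a nearfield is in general noncommutative and $1+1$ need not be a (right-)distributive element. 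So the proposed comparison yields no constraint at all, and the centrality of $1$ --- which by your own reduction is equivalent to the full statement --- is left unproved.

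For context, the paper itself gives no proof: the theorem is quoted from Pilz, and the text notes it was established independently by Dickson, Zassenhaus, Neumann, Karzel and Zemmer; it is a genuinely nontrivial result, not a two-line computation. A complete argument in the finite case can be run along the lines you started: each $L_r$, $r\neq 0$, is an additive automorphism, and $R^*$ acts transitively on $(R,+)\setminus\{0\}$, so all nonzero elements have the same additive order, necessarily a prime $p$; then $(R,+)$ is a finite $p$-group, its center is a nontrivial characteristic subgroup, and transitivity of the automorphism action forces the center to be all of $R$. The infinite case requires B.~H.~Neumann's 1940 computation (or an equivalent), which is substantially longer than the manipulation you sketch. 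As it stands, your proposal proves the characteristic-two case and a correct reduction, but not the theorem.
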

To construct finite Dickson nearfields, we require two concepts:

 \begin{defn} (\cite{pilz2011near})
A pair of numbers $(q,n) \in \mathbb{N}^2$ is called a Dickson pair if
$q$ is some power $p^l$ of a prime $p$, each prime divisor of $n$ divides $q-1$,  $q \equiv 3$ $ \text{mod } 4$ implies $4$ does not divide $n$.
\end{defn}

 \begin{defn}(\cite{pilz2011near})
Let $R$ be a nearfield and $\textit{Aut} (R,+,\cdot ) $ the set of all automorphisms of $N$. A map
\begin{align*}
\phi: \quad & R^* \to  \textit{Aut} (R,+,\cdot )  \\ 
& n \mapsto \phi_n
\end{align*}
is called a coupling map if for all $n,m \in R^*, \phi _n \circ \phi_m= \phi _{ \phi _n (m) \cdot n}.$
\end{defn}
Dickson's pioneering work in $1905$ led to the discovery of the first proper finite nearfield. He achieved this by distorting the multiplication operation of a finite field. For any pair of Dickson numbers $(q, n)$, there exist corresponding finite Dickson nearfields with an order of $q^n$. These nearfields are obtained by starting with the Galois field $GF(q^n)$ and modifying the multiplication operation. Thus $DN(q,n)= (GF(q^n), +, \cdot) ^ { \phi} = \big ( GF(q^n), +, \circ \big )$. We will denote a Dickson nearfield arising from the Dickson pair $(q, n)$ as $DN(q,n)$. For more details regarding the construction of the new multiplication operation denoted by '$\circ$', we refer the reader to \cite{dickson1905finite,pilz2011near}.\\

\begin{exa} (\cite{pilz2011near})
\label{ex2}Consider the field ($GF(3^{2})$, $+$, $\cdot$) with
\[GF(3^{2}) := \{0,1,2,x,1+x,2+x,2x,1+2x,2+2x\},\]
where $x$ is a zero of $x^{2}+1 \in \Bbb{Z}_{3}[x]$ with the new multiplication defined as

$$
a \circ b := \left\{\begin{array}{cc}
              a \cdot b     & \mbox{ if $a$  is a square in ($GF(3^{2})$, $+$, $\cdot$)}\\
              a \cdot b^3 & \mbox{ otherwise }
              \end{array}
\right.
$$
This gives the smallest finite  Dickson nearfield $DN(3,2):=(GF(3^{2})$, $+$, $\circ)$, which is not a field. Here is the table of the new operation $ \circ$ for $DN(3,2)$.

\[
 \begin{array}{r|ccccccccc}
\circ     & 0 & 1             & 2            & x     & 1+x  & 2+x  & 2x   & 1+2x & 2+2x \\ \hline
0         & 0 & 0             & 0            & 0          & 0         & 0         & 0         & 0         & 0\\ 
1         & 0 & 1             & 2            & x     & 1+x  & 2+x  & 2x   & 1+2x & 2+2x \\    
2         & 0 & 2             & 1            & 2x    & 2+2x & 1+2x & x    & 2+x  & 1+x \\    
x    & 0 & x        & 2x      & 2          & 1+2x & 1+x  & 1         & 2+2x & 2+x \\    
1+x  & 0 & 1+x      & 2+2x    & 2+x   & 2         & 2x   & 1+2x & x    & 1 \\    
2+x  & 0 & 2+x      & 1+2x    & 2+2x  & x    & 2         & 1+x  & 1         & 2x \\    
2x   & 0 & 2x       & x       & 1          & 2+x  & 2+2x & 2         & 1+x  & 1+2x \\    
1+2x & 0 & 1+2x     & 2+x     & 1+x   & 2x   & 1         & 2+2x & 2         & x \\    
2+2x & 0 & 2+2x     & 1+x     & 1+2x  & 1         & x    & 2+x  & 2x   & 2     
 \end{array}
\]

We will refer to this example  in later sections.
\end{exa}
The concept of a ring module can be  extended to a more general concept called  a nearring module where the set of scalars is taken to be a nearring.
\begin{defn}
	An additive group $(M,+)$ is called (right) nearring module over a (left) nearring $R$ if there exists a mapping,
	\begin{align*}
		\eta: \thickspace & M \times R \to M \\
		& (m,r) \to mr
	\end{align*} such that $m(r_1+r_2)=mr_1+mr_2$ and $m(r_1r_2)= (mr_1)r_2$ for all $r_1,r_2 \in R$ and $m \in M.$
	
	We write $M_R$ to denote that $M$ is a  (right) nearring module over a (left) nearring  $R$.
\end{defn}
\begin{defn}(\cite{djagbahowell18})
	A subset $A$ of a nearring module $M_R$ is called a $R$-subgroup if  $A$ is a subgroup of $(M,+),$ and  $AR= \lbrace ar \vert a \in A, r \in R \rbrace \subseteq A. $
\end{defn}
\begin{defn} (\cite{djagbahowell18})
	A nearring module $M_R$ is said to be irreducible if $M_R$ contains no proper $R$-subgroups. In other words, the only $R$-subgroups of $M_R$ are $M_R$ and $\lbrace 0 \rbrace.$
\end{defn}
\begin{cor}(\cite{djagbahowell18})
	Let $M_R$ be a unitary $R$-module. Then $M_R$ is irreducible if and only if $mR=M_R$ for every non-zero element $m \in M.$
	\label{cor}
\end{cor}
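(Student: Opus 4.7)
The plan is to prove both implications of the biconditional separately, in each case using the natural candidate $mR$ as the relevant $R$-subgroup. The whole argument should fit on a few lines once we have verified that $mR$ really is an $R$-subgroup of $M_R$, which is the only place the module axioms get used in a nontrivial way.

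For the forward direction, I will assume that $mR = M_R$ for every non-zero $m \in M$ and take an arbitrary $R$-subgroup $A \subseteq M_R$ with $A \neq \{0\}$. Choosing any non-zero $a \in A$, the $R$-subgroup property gives $aR \subseteq A$, and the hypothesis forces $aR = M_R$. Thus $M_R \subseteq A \subseteq M_R$, so $A = M_R$, proving that $\{0\}$ and $M_R$ are the only $R$-subgroups, i.e.\ $M_R$ is irreducible.

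For the reverse direction, assume $M_R$ is irreducible and fix a non-zero $m \in M$. The plan is to show that $mR := \{mr : r \in R\}$ is an $R$-subgroup of $M_R$ and is non-zero, so by irreducibility it must equal $M_R$. Closure of $mR$ under the $R$-action is immediate from $(mr)s = m(rs)$. For the additive subgroup property, I will use the left distributive law $m(r_1+r_2) = mr_1 + mr_2$ built into the definition of a nearring module to get closure under addition, and then derive $m\cdot 0 = 0$ and $m(-r) = -(mr)$ in the standard way from $m\cdot 0 = m(0+0) = m\cdot 0 + m\cdot 0$ and $0 = m(r+(-r))$. Finally, because $M_R$ is unitary, $m\cdot 1 = m \neq 0$, so $mR$ is a non-zero $R$-subgroup, and irreducibility yields $mR = M_R$.

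The only step requiring any care is verifying that $mR$ is a subgroup of $(M,+)$: the additive closure is free from the module axiom, but $0 \in mR$ and closure under negatives are not literally axioms and must be deduced. Beyond that, the role of the \emph{unitary} hypothesis is essential precisely to guarantee $m \in mR$, ensuring that $mR$ is not the trivial subgroup; without it, one could not rule out the degenerate possibility $mR = \{0\}$, which would make the irreducibility argument vacuous.
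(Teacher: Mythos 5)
Your proof is correct: both directions are sound, the verification that $mR$ is an $R$-subgroup (additive closure via left distributivity, $m\cdot 0=0$ and $m(-r)=-(mr)$ deduced from the axioms, closure under the $R$-action via $(mr)s=m(rs)$) is complete, and you correctly isolate the unitary hypothesis as what guarantees $m\in mR$ so that $mR\neq\{0\}$. The paper itself only cites this corollary from the literature without proof, and your argument is exactly the standard one that the cited source's proof follows.
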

\begin{defn} (\cite{djagbahowell18})Let $M_R$ be a nearring module.
	$ N  $ is a submodule  of $M_R$ if :
	\begin{itemize}
		\item $ (N,+)$ is normal subgroup of $(M,+),$
		\item $(m+n)r-mr \in N$ for all $m \in M, n \in N$ and $r \in R.$
	\end{itemize}
\end{defn}

\begin{pro}(\cite{djagbahowell18})
	Let $N$ be a submodule of $M_R.$ Then $N$ is a $R$-subgroup of $M_R.$
	\label{pro}
\end{pro}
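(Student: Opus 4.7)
The plan is to verify the two defining conditions of an $R$-subgroup directly from the submodule axioms. The additive half is immediate: by hypothesis $(N,+)$ is a normal subgroup of $(M,+)$, so in particular $(N,+)$ is a subgroup of $(M,+)$. All the work therefore lies in showing $NR \subseteq N$.

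Given $n \in N$ and $r \in R$, I would invoke the second submodule axiom $(m+n)r - mr \in N$ with the special choice $m = 0_M$. This produces
\[
 nr - 0_M \cdot r \;=\; (0_M + n)r - 0_M \cdot r \;\in\; N,
\]
so $nr$ will lie in $N$ as soon as we can establish $0_M \cdot r = 0_M$. This is the one nontrivial step: the submodule axiom only controls $nr$ modulo $0_M \cdot r$, so separately we must argue that $0_M$ is absorbed by scalar multiplication.

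To settle that residual identity I would combine the two nearring module axioms. Setting $r_1 = r_2 = 0_R$ in $m(r_1+r_2)=mr_1+mr_2$ and cancelling in $(M,+)$ yields $m \cdot 0_R = 0_M$ for every $m \in M$. Associativity $m(r_1 r_2) = (mr_1) r_2$ then gives
\[
 0_M \cdot r \;=\; (m \cdot 0_R)\cdot r \;=\; m \cdot (0_R \cdot r) \;=\; m \cdot 0_R \;=\; 0_M,
\]
where the third equality uses the zero-symmetry $0_R \cdot r = 0_R$ of the nearfield $R$. Plugging this back, $nr = (nr - 0_M \cdot r) + 0_M \in N$, which completes the verification that $NR \subseteq N$ and hence that $N$ is an $R$-subgroup.

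The main obstacle, such as it is, is the need for $0_M \cdot r = 0_M$. The submodule condition by itself is not strong enough to yield $nr \in N$; one must go outside the axiom and exploit zero-symmetry of the scalar nearring. Since $R$ is a (left) nearfield -- in particular a Dickson nearfield as used throughout the paper -- zero-symmetry is standard and presents no real difficulty.
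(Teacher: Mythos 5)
The paper states this proposition only as a citation from \cite{djagbahowell18} and supplies no proof of its own, so there is no in-paper argument to compare against; judged on its own terms, your verification is correct and is the standard one. The additive half is immediate, and specializing the submodule condition to $m=0_M$ reduces everything to the identity $0_M\cdot r=0_M$, which you rightly single out as the only real content: it does not follow from the two nearring-module axioms alone, and the statement genuinely fails over a scalar nearring that is not zero-symmetric (take $M=R_R$ and $N=\{0\}$; then $N$ is a submodule, but $0_R\cdot r\notin N$ whenever $0_R\cdot r\neq 0_R$), so some appeal to zero-symmetry is unavoidable. Your chain is sound: $m\cdot 0_R=0_M$ follows from $m(r_1+r_2)=mr_1+mr_2$ with $r_1=r_2=0_R$, and then $0_M\cdot r=(m\cdot 0_R)r=m(0_Rr)=m\cdot 0_R=0_M$ uses only the associativity axiom plus $0_R\cdot r=0_R$ in $R$. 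The appeal to zero-symmetry is legitimate in this setting: every nearfield except the trivial two-element pathology satisfies $0_R\cdot r=0_R$, and in particular the Dickson nearfields such as $DN(3,2)$ used throughout the paper do, so your proof establishes the proposition in the generality in which the paper uses it.
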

Note that the converse of this proposition is not true in general. In his   thesis (\cite{djagbahowell18}, page $14$) Beidleman gives a counter example. However,
\begin{lem}
	If $M_R$  is a ring module,  then the notions of $R$-subgroup and submodule of $M_R$ coincide.
\end{lem}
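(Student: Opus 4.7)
The plan is to combine the stated Proposition (every submodule is an $R$-subgroup) with a proof of the converse, which is the only missing direction for a coincidence statement. So I would fix an arbitrary $R$-subgroup $N$ of the ring module $M_R$ and check the two defining conditions for $N$ to be a submodule.

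For the first condition, that $(N,+)$ is a normal subgroup of $(M,+)$, I would appeal to the fact that the additive group of a ring module is abelian (this is part of the standard ring-module data, or derivable from a unit using both distributive laws together with $m(1+1) = (m+n)(1+1)$ expanded two ways). In an abelian group every subgroup is normal, so this condition is automatic once we know $(N,+)$ is a subgroup, which it is by the $R$-subgroup hypothesis.

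For the second condition, that $(m+n)r - mr \in N$ for all $m \in M$, $n \in N$, $r \in R$, I would use the decisive distinction between a ring module and a general nearring module: both distributive laws hold, so the right distributive law $(m+n)r = mr + nr$ is now available. Substituting and cancelling $mr$ reduces the expression to $nr$, and $nr \in N$ because $N$ is closed under right scalar multiplication by $R$ as an $R$-subgroup. This finishes the verification.

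The main obstacle is conceptual rather than computational: one must identify precisely which axiom the ring hypothesis is being used for. Beidleman's counterexample (cited just above the lemma) shows that over a genuine nearring the inclusion $(m+n)r - mr \in N$ can fail for an $R$-subgroup, because in the nearring module axioms the identity $(m+n)r = mr + nr$ is not assumed. The content of the lemma is therefore exactly that restoring this one identity, together with the abelianness of $(M,+)$, is enough to collapse $(m+n)r - mr$ to $nr$ and thereby force every $R$-subgroup to be a submodule.
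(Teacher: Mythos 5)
Your proposal is correct and follows essentially the same route as the paper: the key step in both is that right distributivity (restored by the ring hypothesis) collapses $(m+n)r - mr$ to $nr$, which lies in the $R$-subgroup by closure under scalar multiplication. You are slightly more careful than the paper in explicitly checking normality of $(N,+)$ via the abelianness of the additive group, a point the paper's proof leaves implicit.
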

\begin{proof}
	By Proposition \ref{pro}, every $R$-submodule is a $R$-subgroup. Let $H$ be a $R$-subgroup of $M_R.$ Then $hr \in H$ for all $h \in H$ and $r \in R.$ But $hr=(m+h)r-mr$ for all $m \in M.$ Hence $H$ is a submodule of $M_R.$
\end{proof}
\begin{thm}(\cite{djagbahowell18})
	Let $R$ be a nearring that contains a right identity element $e \neq 0.$ $R$ is division nearring if and only if $R$ contains no proper $R$-subgroups.
	\label{irre}
\end{thm}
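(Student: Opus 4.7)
The natural starting point is to view $R$ as a right $R$-module over itself; since $e$ is a right identity, $R_R$ is unitary in the sense required by Corollary~\ref{cor}. The statement ``$R$ has no proper $R$-subgroups'' is precisely the irreducibility of $R_R$, so Corollary~\ref{cor} reduces the theorem to proving the equivalence of ``$mR = R$ for every non-zero $m \in R$'' with ``$R$ is a division nearring.''

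For the forward implication, I would first show that the group identity $1$ of $(R^{*},\cdot)$ coincides with $e$: we have $1 \cdot e = 1$ because $1$ is the identity of $R^{*}$, while $1 \cdot e = e$ because $e$ is a right identity of $R$, so $e=1$. Combined with $e \cdot 0 = 0$ (from left distributivity, $e\cdot 0 = e(0+0) = e\cdot 0 + e\cdot 0$), this makes $e$ a two-sided identity on all of $R$. Then for any non-zero $m$ and arbitrary $r \in R$, associativity of multiplication gives $m(m^{-1} r) = (m m^{-1}) r = e r = r$, so $mR = R$.

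The reverse implication is the more delicate part. From $mR = R$ we extract, for each non-zero $m$, some $m' \in R$ with $m m' = e$; note $m' \neq 0$, since otherwise $m m' = m \cdot 0 = 0 \neq e$ by left distributivity. The key step is upgrading $m'$ to a left inverse without being able to assume $e$ acts as a left identity. The trick is to apply the same existence statement to $m'$: choose $m''$ with $m' m'' = e$. Then $m'(m m') = m' e = m'$; multiplying on the right by $m''$ and using associativity of $(R,\cdot)$ yields $(m' m)(m' m'') = m' m''$, i.e.\ $(m' m)\cdot e = e$, hence $m' m = e$. Once two-sided inverses exist, the identity $r = r e = r(r' r) = (r r') r = e r$ for non-zero $r$ (together with $e\cdot 0 = 0$) shows $e$ is a two-sided identity, and absence of zero divisors then follows at once: $a b = 0$ with $a \neq 0$ gives $b = e b = (a^{-1} a) b = a^{-1}(a b) = a^{-1}\cdot 0 = 0$. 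Therefore $R^{*}$ is closed under multiplication and, together with the existence of two-sided inverses and associativity, forms a group, so $R$ is a division nearring.

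The only genuinely non-routine step is this upgrade of a right inverse to a two-sided inverse under the sole assumption of a right identity; everything else is a direct application of Corollary~\ref{cor} together with the semigroup and left-distributive axioms of a nearring.
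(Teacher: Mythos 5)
Your proof is correct, and the key step is handled properly: the upgrade from the right inverse $m'$ (with $mm'=e$) to a two-sided inverse via $(m'm)m'=m'$ and right multiplication by $m''$ is exactly the argument needed in the absence of a left identity, and your verifications that $m'\neq 0$, that $e$ becomes a two-sided identity, and that $R^{*}$ is closed under multiplication are all sound. Note, however, that the paper does not prove this theorem at all --- it is quoted from \cite{djagbahowell18} --- so there is no in-paper argument to compare with; your write-up is a complete, self-contained proof along the standard lines (irreducibility of $R_R$ $\Leftrightarrow$ $mR=R$ for all nonzero $m$, then manufacture of inverses). Two small remarks. First, in the forward direction the two justifications are transposed: $1\cdot e=1$ holds because $e$ is a right identity of $R$, while $1\cdot e=e$ holds because $1$ is the identity of $R^{*}$; the conclusion $e=1$ is of course unaffected. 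Second, you can avoid leaning on the hypothesis ``unitary'' in Corollary~\ref{cor} (whose precise meaning the paper never spells out) by arguing directly: for $m\neq 0$ the set $mR$ is an $R$-subgroup of $R_R$ (closed under addition by left distributivity, under negatives since $m\cdot 0=0$, and under scaling by associativity) which contains $me=m\neq 0$, so the hypothesis ``no proper $R$-subgroups'' forces $mR=R$; conversely any nonzero $R$-subgroup $A$ contains $aR=R$ for any $0\neq a\in A$. This gives the same reduction without any appeal to the unproved corollary.
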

\begin{rem}
	Let $R$ be a nearfield. By Theorem \ref{irre}, $R_R$  is irreducible $R$-module. Thus $R$ contains only $\{ 0 \}$ and $R$ as submodules of $R_R.$
\end{rem}

\begin{defn}(\cite{djagbahowell18})
	Let $\lbrace M_i \vert i \in I \rbrace $ be a collection of submodules  of the nearring module $M_R$.  $M_R$ is said to be a direct sum of the submodules $ M_i,$ for  $ i \in I, $ if  the additive group $(M,+)$ is a direct sum of the normal subgroups $ (M_i,+),$ for  $ i \in I $. In this case we write $M_R= \bigoplus _{i \in I} M_i.$
\end{defn}

\begin{pro}(\cite{djagbahowell18})
	$M_R= \sum _ {i \in I} M_i$ and every element of $M_R$ has a unique representation as a finite sum of elements chosen from the submodules $M_i$ if and only if $ M_R= \sum _{i \in I} M_i$ and $M_k \cap \sum _{i \in I, i \ne k} M_i= \lbrace 0 \rbrace.$
\end{pro}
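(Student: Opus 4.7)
This proposition is really a statement about the additive structure of $M_R$ and its submodules: since each $(M_i,+)$ is a normal subgroup of $(M,+)$, the near\-ring action plays no role. My plan is to prove the two implications separately, being careful that $(M,+)$ need not be abelian.

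For the forward direction ($\Rightarrow$), I assume uniqueness of representation and fix some $k \in I$. Given $x \in M_k \cap \sum_{i \neq k} M_i$, I write $x$ in two ways: once as the one-term sum $x = x$ with $x \in M_k$, and once as $x = m_{i_1} + \cdots + m_{i_n}$ with each $m_{i_j} \in M_{i_j}$ and $i_j \neq k$. Subtracting gives
\[
0 = -x + m_{i_1} + \cdots + m_{i_n},
\]
which is a representation of $0$ as a finite sum of elements drawn from the $M_i$. Since $0 = 0 + 0 + \cdots$ is another such representation, uniqueness forces every summand to vanish, so $x = 0$.

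For the converse ($\Leftarrow$), I assume the intersection condition and suppose an element of $M$ admits two representations, which after padding with zeros I may take over a common index set:
\[
m_{i_1} + \cdots + m_{i_n} = m'_{i_1} + \cdots + m'_{i_n}, \qquad m_{i_j}, m'_{i_j} \in M_{i_j}.
\]
I would argue by induction on $n$; the case $n=1$ is trivial. For the inductive step the key trick is to use the normality of each $(M_{i_j},+)$ to show that
\[
m_{i_n} - m'_{i_n} \;\in\; M_{i_n} \cap \sum_{j<n} M_{i_j}.
\]
Namely, starting from $(m_{i_1}+\cdots+m_{i_n}) - (m'_{i_1}+\cdots+m'_{i_n}) = 0$ and using that $a + M_{i_j} = M_{i_j} + a$ for every $a \in M$, I can rewrite the expression so that all terms from $M_{i_1}, \ldots, M_{i_{n-1}}$ appear on the left and the single difference from $M_{i_n}$ on the right. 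The hypothesis then forces $m_{i_n}=m'_{i_n}$, after which cancellation reduces us to the $n-1$ case and induction finishes the argument.

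The main obstacle is precisely this non-abelian bookkeeping: unlike the familiar vector-space proof one cannot freely rearrange summands. The remedy is to exploit normality systematically, which is why the definition of submodule built it in. Once the rearrangement is handled, both implications are entirely elementary.
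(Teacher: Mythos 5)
Your argument is essentially correct, and note that the paper itself offers no proof to compare against: this proposition is quoted from \cite{djagbahowell18} as a preliminary, so your write-up supplies the standard argument one would find there. Both directions work as you describe; the only detail worth tightening is in the forward direction, where an element of $\sum_{i\neq k}M_i$ is a priori a finite sum possibly with repeated indices, so before invoking uniqueness you should merge repeated-index terms (using normality to commute terms past each other, exactly the rearrangement device you already use in the converse). With that small remark, and observing that in the converse the terms $m_{i_n}$ and $-m'_{i_n}$ are in fact already adjacent after subtraction--so the needed containment $m_{i_n}-m'_{i_n}\in M_{i_n}\cap\sum_{j<n}M_{i_j}$ follows from the subgroup property of the partial sums--your induction goes through and the proof is complete.
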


We also have that
\begin{pro}(\cite{djagbahowell18})
	Let $\lbrace M_i \vert \thickspace  i \in I \rbrace $ be a collection of submodules  of the nearring module $M_R$. Then $M_R = \bigoplus _{i \in I} M_i$ implies that $M_R= \sum_{i \in I } M_i$ and the elements of any two distinct submodules permute.
	\label{rp}
\end{pro}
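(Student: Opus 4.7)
The plan is to observe that the first conclusion, $M_R = \sum_{i \in I} M_i$, is immediate from the definition of direct sum of submodules: that definition requires the additive group $(M,+)$ to be the (group-theoretic) direct sum of the normal subgroups $(M_i,+)$, which in particular forces $(M,+) = \sum_{i \in I}(M_i,+)$. So the only substantive content is the commutativity claim.

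To establish commutativity, I would fix distinct indices $i, j \in I$ and pick $a \in M_i$, $b \in M_j$. The natural object to study is the commutator
\[
c := a + b - a - b,
\]
and I would show that $c \in M_i \cap M_j$. Writing $c = a + (b - a - b)$, the element $b - a - b$ lies in $M_i$ because $M_i$ is a normal subgroup of $(M,+)$ (closure of $M_i$ under conjugation by $b$, together with $-a \in M_i$), so $c \in M_i$. Symmetrically, writing $c = (a + b - a) + (-b)$, normality of $M_j$ gives $a + b - a \in M_j$, hence $c \in M_j$.

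To conclude, I would invoke the preceding characterization of direct sums, namely $M_k \cap \sum_{i \neq k} M_i = \{0\}$. Since $M_i \cap M_j \subseteq M_i \cap \sum_{k \neq i} M_k = \{0\}$, we obtain $c = 0$, i.e.\ $a + b = b + a$. I do not foresee a genuine obstacle; the only care required is to manipulate the additive expressions without tacitly assuming commutativity of $(M,+)$, since the abelianness guaranteed by Theorem \ref{tt} applies to the additive group of a nearfield, not to an arbitrary nearring module.
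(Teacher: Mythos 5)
Your argument is correct. Note that the paper itself does not prove this proposition: it is quoted from the cited reference, so there is no in-paper proof to compare against. What you give is the classical group-theoretic commutator argument, and it goes through exactly as you describe: $b-a-b$ is the conjugate of $-a\in M_i$ by $b$, so $c=a+b-a-b\in M_i$ by normality of $M_i$, and symmetrically $a+b-a\in M_j$ gives $c\in M_j$; then $c\in M_i\cap M_j=\{0\}$ forces $a+b=b+a$. Two small points are worth making explicit. First, the trivial-intersection property $M_k\cap\sum_{i\neq k}M_i=\{0\}$ that you invoke is exactly what the group-theoretic notion of internal direct sum of the normal subgroups $(M_i,+)$ provides (equivalently, it follows from uniqueness of representations via the preceding proposition), so your appeal to it is legitimate, but you should say which formulation of ``direct sum of normal subgroups'' you are using. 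Second, the set $\sum_{i\neq k}M_i$ of finite sums of elements from the $M_i$, $i\neq k$, is a subgroup precisely because each $M_i$ is normal in $(M,+)$; this is needed for the intersection statement to make sense, and it is also why your care in not assuming commutativity of $(M,+)$ is the right instinct. With those remarks, the proof is complete.
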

According to the definition of a nearring module, there is no distributivity of elements of $R$ over the elements of $M$. If we consider $M_R$ as direct sum of the  collection of submodules  $\lbrace M_i \vert \thickspace i \in I \rbrace$ of the nearring module $M_R$, the following result enables us to distribute the elements of $R$ over elements contained in distinct submodules within the direct sum. This result holds significant utility within the concept of Beidleman near-vector spaces.

\begin{lem}(\cite{djagbahowell18})Let $M_R = \bigoplus _{i \in I} M_i,$  $M_i$ is a submodule of $M_R.$ If $m =\sum_{i \in I} m_i$ where $m_i \in M_i$ and $r \in R$ then
	\begin{align*}
		mr= \big ( \sum_{i \in I} m_i \big ) r= \sum_{i \in I}( m_ir).
	\end{align*}
	\label{lemm}
\end{lem}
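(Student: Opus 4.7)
The plan is to reduce to the finite case and proceed by induction on the number of nonzero summands, with the crux living in the two-summand case where the definition of a submodule must be combined with the uniqueness of the direct-sum decomposition. Since $m = \sum_{i \in I} m_i$ is by convention a finite sum (only finitely many $m_i$ are nonzero), we may relabel and assume $m = m_1 + m_2 + \cdots + m_n$ with $m_k \in M_{i_k}$ for distinct indices $i_1,\dots,i_n$. The case $n=1$ is vacuous.

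For the base case $n=2$, write $m = m_1 + m_2$ with $m_1 \in M_{i_1}$ and $m_2 \in M_{i_2}$. By the definition of a submodule applied to $M_{i_2}$ with $m_1 \in M_R$, we get
\[
(m_1 + m_2)r - m_1 r \in M_{i_2},
\]
so there exists $a \in M_{i_2}$ with $(m_1+m_2)r = m_1 r + a$. By Proposition \ref{rp}, elements of $M_{i_1}$ and $M_{i_2}$ permute, so $m_1 + m_2 = m_2 + m_1$, and applying the submodule definition with $M_{i_1}$ yields some $b \in M_{i_1}$ with $(m_1+m_2)r = m_2 r + b = b + m_2 r$. Here I use that each $M_{i_k}$ is in particular an $R$-subgroup (Proposition \ref{pro}), so $m_k r \in M_{i_k}$. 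Hence
\[
m_1 r + a = b + m_2 r,
\]
and both sides are written as a sum of an element of $M_{i_1}$ with one of $M_{i_2}$. Uniqueness of representation in the direct sum forces $b = m_1 r$ and $a = m_2 r$, giving $(m_1+m_2)r = m_1 r + m_2 r$.

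For the inductive step, suppose the result holds for any sum of fewer than $n$ terms, and let $m = \sum_{k=1}^{n} m_k$. Set $m' = \sum_{k=1}^{n-1} m_k$; then $m'$ lies in the submodule $\bigoplus_{k=1}^{n-1} M_{i_k}$, which is itself a submodule of $M_R$ and intersects $M_{i_n}$ in $\{0\}$, with elements of the two permuting (again by Proposition \ref{rp}). Applying the two-summand case to $m = m' + m_n$ gives $mr = m' r + m_n r$, and the inductive hypothesis gives $m' r = \sum_{k=1}^{n-1} m_k r$, yielding the claim.

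The only real obstacle is the base case: one must resist the temptation to ``distribute'' directly (which is not available), and instead extract the needed additive decomposition from the submodule axiom on each side, then close the argument via commutativity of the submodules and uniqueness in the direct sum. Once that is done, the induction is purely formal.
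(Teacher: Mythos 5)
The paper states this lemma without proof, importing it from \cite{djagbahowell18}, so there is no in-paper argument to compare against; judged on its own, your proof is correct and is essentially the standard argument. The two-summand case is handled properly: applying the submodule axiom once with $N=M_{i_2}$ and once, after commuting the summands via Proposition \ref{rp}, with $N=M_{i_1}$ gives two decompositions of $(m_1+m_2)r$ into an $M_{i_1}$-part plus an $M_{i_2}$-part, and uniqueness of representation in the direct sum identifies these parts with $m_1r$ and $m_2r$; your use of Proposition \ref{pro} to get $m_kr\in M_{i_k}$, and of the permuting property to control the order of summands (since $(M,+)$ is not assumed abelian), is exactly what is required. The only step you assert without justification is that $\sum_{k=1}^{n-1}M_{i_k}$ is again a submodule of $M_R$, which your induction needs before it can invoke the two-summand case. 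This is true and routine: for $n=n_1+n_2$ with $n_j\in N_j$ one writes
\begin{align*}
(m+n_1+n_2)r-mr=\big((m+n_1+n_2)r-(m+n_1)r\big)+\big((m+n_1)r-mr\big)\in N_2+N_1=N_1+N_2,
\end{align*}
and normality of the sum of normal subgroups is standard; a line to this effect would close the argument completely. Alternatively, you could avoid the claim altogether: peel off $m_n$ using the submodule axiom for $M_{i_n}$ alone to get $mr=a+\sum_{k<n}m_kr$ with $a\in M_{i_n}$ (after applying the inductive hypothesis to $\sum_{k<n}m_k$), do the same peeling off $m_1$ to get $mr=b+\sum_{k\neq 1}m_kr$ with $b\in M_{i_1}$, and then uniqueness of the full decomposition forces $a=m_nr$. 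Either way the result follows; your route is fine once the submodule-sum remark is verified.
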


\begin{defn}(\cite{djagbahowell18})
	A nearring module $M_R$ is called strictly semi-simple if $M_R$ is a direct sum of irreducible submodules.
\end{defn}
We now have,
\begin{defn}(\cite{djagbahowell18}) Let $(M,+)$ be a group.
	$M_R$ is called Beidleman near-vector space if $M_R$ is a strictly semi-simple $R$-module  where $R$ is a nearfield.
\end{defn}

\begin{thm}[\cite{djagbahowell18,beidleman1966near}] Let $R$ be a (left) nearfield and $M_R$ a (right) nearring module. $M_R$ is
			a finite dimensional near-vector space if and only if $ M_R $ is isomorphic to $R^n$ for some positive integer 
			$n$.
		\end{thm}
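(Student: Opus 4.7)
The plan is to prove both directions by exploiting the strictly semi-simple decomposition together with Theorem \ref{irre}, which controls the irreducible modules over a nearfield.

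For the forward direction, suppose $M_R$ is a finite-dimensional Beidleman near-vector space. By definition $M_R=\bigoplus_{i=1}^{n}M_i$ for irreducible submodules $M_i$. For each $i$, fix a nonzero $m_i\in M_i$ and define $\phi_i:R\to M_i$ by $\phi_i(r)=m_ir$. The defining axioms of a nearring module give $\phi_i(r_1+r_2)=\phi_i(r_1)+\phi_i(r_2)$ and $\phi_i(r_1r_2)=\phi_i(r_1)r_2$, so $\phi_i$ is an $R$-module homomorphism. Surjectivity follows from Corollary \ref{cor}, since $m_iR=M_i$. For injectivity, note that $\ker\phi_i$ is an $R$-subgroup of $R$; by Theorem \ref{irre} the only $R$-subgroups of $R$ are $\{0\}$ and $R$, and $\ker\phi_i=R$ would force $m_i=m_i\cdot 1=0$, a contradiction. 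Hence each $\phi_i$ is an $R$-isomorphism $R\cong M_i$.

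I would then assemble these component isomorphisms into a global isomorphism $\Phi:R^n\to M_R$ by $\Phi(r_1,\dots,r_n)=\sum_{i=1}^{n}m_ir_i$. That $\Phi$ is an additive bijection is immediate from the direct-sum decomposition and Theorem \ref{tt} (abelianness of $(M,+)$), combined with Proposition \ref{rp}. The delicate point, and the main obstacle, is verifying that $\Phi$ respects scalar multiplication: one must check that $\Phi((r_1,\dots,r_n)s)=\Phi(r_1,\dots,r_n)s$ even though right distributivity is not available in general. This is exactly where Lemma \ref{lemm} is essential, since it provides
\[
\Bigl(\sum_{i=1}^{n}m_ir_i\Bigr)s=\sum_{i=1}^{n}(m_ir_i)s=\sum_{i=1}^{n}m_i(r_is),
\]
which matches $\Phi\bigl((r_1s,\dots,r_ns)\bigr)$ componentwise in $R^n$.

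For the converse, assume $M_R\cong R^n$; it suffices to show $R^n$ is strictly semi-simple. Let $M_i=\{0\}\times\cdots\times R\times\cdots\times\{0\}$ with $R$ in the $i$-th slot. Each $(M_i,+)$ is a normal subgroup of $(R^n,+)$ since the additive group is abelian by Theorem \ref{tt}. To see $M_i$ is a submodule, take $m=(m_1,\dots,m_n)\in R^n$, $a=(0,\dots,0,a_i,0,\dots,0)\in M_i$ and $r\in R$; componentwise computation gives $(m+a)r-mr=(0,\dots,0,(m_i+a_i)r-m_ir,0,\dots,0)\in M_i$. Each $M_i$ is $R$-isomorphic to $R$, hence irreducible by the remark following Theorem \ref{irre}. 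Clearly $R^n=\bigoplus_{i=1}^{n}M_i$, so $R^n$ is a direct sum of irreducible submodules, i.e.\ a finite-dimensional Beidleman near-vector space, completing the proof.
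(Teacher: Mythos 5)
Your proof is correct. The paper itself gives no proof of this theorem---it is quoted from Beidleman and \cite{djagbahowell18}---but your argument is exactly the standard one behind the cited result: each irreducible summand $M_i$ equals $m_iR\cong R_R$ via $r\mapsto m_ir$ (using Corollary \ref{cor} and Theorem \ref{irre}), and the component isomorphisms are assembled with Lemma \ref{lemm}, which is indeed the one place where the missing right distributivity must be confronted; the converse, exhibiting $R^n=\bigoplus_{i=1}^n M_i$ with each coordinate axis an irreducible submodule (normality being free since $(R^n,+)$ is abelian by Theorem \ref{tt}), is likewise the expected argument. The only hypothesis you use tacitly is that the summands are unitary modules---needed both for $m_iR=M_i$ via Corollary \ref{cor} and for $m_i\cdot 1=m_i$ in your injectivity step---which is implicit in Beidleman's framework and in the paper's preliminaries.
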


\subsection{Subgroups of $R^n$}
		
	In \cite{djagbahowell18}, $R$-subgroups of finite-dimensional near vector spaces were classified using the Expanded Gaussian Elimination (EGE) algorithm. This algorithm constructs the smallest $R$-subgroup containing a given finite set of vectors. It's important to note that such an $R$-subgroup always exists since the intersection of subgroups is also a subgroup.

		\begin{defn}
			Let $V$ be a set of vectors. Define $gen(V)$ to be the intersection of all $R$-subgroups containing $V$.
		\end{defn}
			\label{sec:linearity-index}
		Let $LC_0(v_1,v_2,\ldots,v_k):=\{ v_1,v_2,...,v_k\}$ and for $n\geq0$, let $LC_{n+1}$ be the set of all linear combinations of elements in $LC_n(v_1,v_2,\ldots,v_k)$, i.e.
		\begin{equation*}
			LC_{n+1}(v_1,v_2,\ldots,v_k)=\left \{ \sum_{i=1}^\ell w_i\lambda_i
			\;|\; \ell\ge 0, w_i\in LC_n,\lambda_i\in R \;\forall 1\le i\le\ell \right\}.
		\end{equation*}
		
		\begin{thm}[Theorem 5.2 in~\cite{djagbahowell18}]
			\label{thm:gen-lc} Let $v_1,v_2,\ldots,v_k \in R^n$. We have
			\begin{equation*}
				gen(v_1,\ldots,v_k)=\bigcup_{i=0}^\infty LC_i(v_1,\ldots,v_k).
			\end{equation*}
		\end{thm}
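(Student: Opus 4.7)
The plan is to prove both inclusions separately. For the containment $\bigcup_{i=0}^\infty LC_i \subseteq gen(v_1,\ldots,v_k)$, I proceed by induction on $i$. The base case $LC_0 = \{v_1,\ldots,v_k\} \subseteq gen(v_1,\ldots,v_k)$ is immediate. For the inductive step, any element of $LC_{n+1}$ has the form $\sum_{j=1}^\ell w_j\lambda_j$ with $w_j \in LC_n \subseteq gen(v_1,\ldots,v_k)$ by hypothesis and $\lambda_j \in R$; since $gen(v_1,\ldots,v_k)$ is an $R$-subgroup, it is closed under right multiplication by elements of $R$ and under addition, so the whole sum remains inside it.

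For the reverse inclusion, my strategy is to show that $U := \bigcup_{i=0}^\infty LC_i$ is itself an $R$-subgroup of $R^n$ containing $\{v_1,\ldots,v_k\}$; then by the definition of $gen(v_1,\ldots,v_k)$ as the intersection of all such subgroups, $gen(v_1,\ldots,v_k) \subseteq U$. Before checking the $R$-subgroup axioms I first establish the chain property $LC_n \subseteq LC_{n+1}$ for every $n$: any $w \in LC_n$ equals the one-term linear combination $w \cdot 1$, using that the nearfield $R$ has a multiplicative identity and the canonical action on $R^n$ is unitary. This chain property lets me push any finite collection of elements of $U$ into a common $LC_n$.

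With the chain property in hand, the $R$-subgroup verifications become one-step moves up the $LC$ hierarchy: given $u \in LC_m$ and $v \in LC_n$ with $m \le n$, both lie in $LC_n$, so $u+v = u\cdot 1 + v\cdot 1 \in LC_{n+1}$; closure under right scalars is $ur = u\cdot r \in LC_{n+1}$; the zero vector arises as the empty sum ($\ell = 0$ in the definition) and so lies in $LC_1$; and the additive inverse $-u$ of $u \in LC_n$ is realized as $u \cdot (-1) \in LC_{n+1}$, where the identity $u\cdot(-1) = -u$ is obtained from the module distributivity $m(r_1+r_2) = mr_1+mr_2$ together with $u \cdot 0 = 0$ (the latter following from $u\cdot 0 = u\cdot(0+0) = u\cdot 0 + u\cdot 0$). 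The main obstacle, really the only subtle point, is that $LC_{n+1}$ is defined using elements of $LC_n$ alone, not of $\bigcup_{j \le n} LC_j$, so no closure argument is available without first proving the chain property; once that is in place, every remaining step is a routine application of the nearring-module axioms.
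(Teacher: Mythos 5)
Your proof is correct. Note that the paper you are being compared against does not actually prove this statement: it quotes it as Theorem 5.2 of the cited reference \cite{djagbahowell18}, so there is no in-paper argument to diverge from; your double-inclusion argument is the natural one. Both directions check out: the forward inclusion is a clean induction using that $gen(v_1,\ldots,v_k)$, being an intersection of $R$-subgroups, is itself an $R$-subgroup; and for the reverse inclusion you correctly isolate the only delicate point, namely that $LC_{n+1}$ is built from $LC_n$ alone, so you first need the chain property $LC_n\subseteq LC_{n+1}$, which you get from $w=w\cdot 1$ (valid since the action of the nearfield on $R^n$ is componentwise and unitary). Your remaining verifications are also sound: $0\in LC_1$ via the empty sum $\ell=0$ permitted by the definition, closure under addition and right scalars by moving one level up the hierarchy, and $-u=u\cdot(-1)$ from $u\cdot 0=0$ together with the module identity $u(r_1+r_2)=ur_1+ur_2$ (no right distributivity is ever needed, which is exactly why the argument survives in the nearfield setting).
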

		
		Let $M_R$ be a nearring module. Let $V\subseteq M_R$ and let $T$ be an $R$-subgroup of $M_R$.
		\begin{defn}[Seed set]
			We say that \emph{$V$ generates $T$} if $gen(V)=T$.
			In that case we say that $V$
			is a \emph{seed set} of $T$. 
			We also define the \emph{seed number} $seed(T)$
			to be the cardinality of a smallest seed set of $T$.
		\end{defn}
		
		In \cite{djagba}, it was demonstrated that each $R$-subgroup can be expressed as a direct sum of modules $u_iR$ of a special kind:
		
		\begin{thm}[Theorem~5.12 in~\cite{djagbahowell18}]
			\label{thm:ege}
			Let $R$ be a proper nearfield
			and $\{v_1,\ldots,v_k\}$ be vectors in $R^n$. Then,
			$gen(v_1,\ldots,v_k)=\bigoplus_{i=1}^\ell u_iR$,
			where the $u_i$ are rows of some matrix $U=(u_{ij})\in R^{\ell\times n}$
			such that each of its columns has at most one non-zero entry.
		\end{thm}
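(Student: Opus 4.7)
The plan is to prove this by induction on $k$, constructing the matrix $U$ via an Expanded Gaussian Elimination (EGE) procedure. Throughout, we maintain a matrix whose rows have pairwise disjoint supports (equivalently, each column has at most one non-zero entry) and whose rows generate $gen(v_1,\ldots,v_j)$ at step $j$. The base case $j=0$ takes $U$ to be the empty matrix; for the inductive step, we show how to incorporate $v_j$ while restoring the invariant.

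When absorbing $v := v_j$, we use the left distributivity of the nearfield action repeatedly: whenever $v$ has entry $b \neq 0$ in a column $c$ and some existing row $u_i$ has entry $a \neq 0$ in column $c$, replace $v$ by $v - u_i \cdot (a^{-1} b)$, which zeros the $c$-th entry. Because $\mathrm{supp}(u_i)$ is disjoint from all other rows' supports, this alteration affects $v$ only in columns of $\mathrm{supp}(u_i)$, so no new conflicts with other rows arise. If $v$ becomes $0$, discard it; if the resulting support of $v$ is disjoint from every existing row, append it as a new row. The remaining case is that $v$ still conflicts with $u_i$ on a proper subset of $\mathrm{supp}(u_i)$; then we invoke a \emph{splitting} step, using that $u_iR + vR$ (being strictly larger than $u_iR$ whenever $v \notin u_iR$) contains vectors supported on proper subsets of $\mathrm{supp}(u_i)$, from which we extract replacement rows with smaller, disjoint supports. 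Termination follows because the total number of non-zero entries in $U$ is bounded by $n$, and each splitting strictly refines the partition of supports.

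Preservation of $gen$ at each step is immediate from Theorem~\ref{thm:gen-lc}: every elementary operation either replaces a generator by a linear combination of current generators or appends an element already lying in the current $gen$. Once the invariant is restored for $j = k$, the direct-sum statement reduces to uniqueness of representation: suppose $\sum_{i=1}^\ell u_i r_i = 0$ with $r_i \in R$. For each $i$, pick any $j \in \mathrm{supp}(u_i)$; by disjointness of supports no other $u_{i'}$ is non-zero in column $j$, so the $j$-th coordinate gives $u_{ij} r_i = 0$, and since $u_{ij} \in R^*$ and $R$ is a nearfield, $r_i = 0$ for every $i$. Hence $\sum_{i=1}^\ell u_i R = \bigoplus_{i=1}^\ell u_i R$.

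The main obstacle is the splitting step, which has no analogue in classical Gaussian elimination over a field. Because right distributivity fails in a proper nearfield, simple row-additions and scalings do not exhaust the submodule $u_iR + vR$: this submodule can contain ``diagonal-like'' vectors with support strictly smaller than $\mathrm{supp}(u_i)$, and extracting a canonical set of replacement rows with the required disjoint-support property requires a careful argument (and is precisely what explains why $R^m$ over a proper nearfield can be generated by strictly fewer than $m$ vectors). Verifying that such splittings can always be performed and that the process terminates with a matrix of the claimed form is the technical heart of the argument; everything else is bookkeeping once EGE is shown to be correct.
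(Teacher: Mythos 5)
Your overall scaffolding (row operations preserving $gen$ via Theorem~\ref{thm:gen-lc}, the disjoint-support invariant, and the final direct-sum argument from disjoint supports) matches the paper's EGE strategy, but there is a genuine gap at exactly the point you yourself flag as ``the technical heart'': the splitting step is asserted, not proved. You claim that whenever $v\notin u_iR$ still conflicts with $u_i$ on part of $\mathrm{supp}(u_i)$, the $R$-subgroup $u_iR+vR$ contains nonzero vectors supported on a proper subset of $\mathrm{supp}(u_i)$ from which disjoint-support replacement rows can be extracted. Nothing in your argument justifies this, and it cannot be justified without using that $R$ is a \emph{proper} nearfield: over a field the analogous claim (and the theorem itself, once two rows with overlapping supports are present) is false, e.g.\ the span of $(1,1,0)$ and $(0,1,1)$ over a field is not a sum of modules with disjoint supports. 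Your proposal never invokes the failure of right distributivity in any concrete way, so the hypothesis that makes the theorem true is never actually used.

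The paper closes exactly this hole with an explicit construction (the ``distributivity trick''). If column $j$ is the first column with two nonzero entries $w_r^j,w_s^j$, pick $\alpha,\beta,\lambda\in R$ with $(\alpha+\beta)\lambda\neq\alpha\lambda+\beta\lambda$, set $\alpha'=(w_r^j)^{-1}\alpha$, $\beta'=(w_s^j)^{-1}\beta$, and form
\begin{equation*}
\theta=(w_r\alpha'+w_s\beta')\lambda-w_r(\alpha'\lambda)-w_s(\beta'\lambda)\in LC_2(w_r,w_s)\subseteq gen(v_1,\ldots,v_k).
\end{equation*}
Since $j$ is the first doubly occupied column, $\theta$ vanishes in all columns before $j$, while $\theta^j=(\alpha+\beta)\lambda-\alpha\lambda-\beta\lambda\neq 0$; normalizing by $(\theta^j)^{-1}$ yields a new pivot row $\phi$ with $\phi^j=1$, which is then used to clear the $j$-th entries of $w_r$ and $w_s$ without changing the $gen$. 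Iterating this (the number of rows may grow, which is precisely why fewer than $m$ vectors can generate $R^m$) produces the matrix $U$ with at most one nonzero entry per column. To repair your proof you would need to supply this construction, or an equivalent one, in your splitting step; as written, the remaining parts of your argument (elimination using $v-u_i(a^{-1}b)$, preservation of $gen$, and the uniqueness argument giving directness) are fine but only bookkeeping around the missing core.
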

	The EGE algorithm, is presented below, illustrates the proof of the theorem mentioned above. It is employed to compute the smallest $R$-subgroup for a given set of vectors.

\begin{proof}
Given a particular set of vectors $v_1,\ldots,v_k$, arrange them in a matrix $V$ whose $i$-th row is composed of the components of $v_i$, i.e., $V=(v_i^j)$ where $1 \leq j \leq n$. Then $gen(v_1,\ldots,v_k)$ is the $R$-row space of $V$, which is a $R$-subgroup of $R^n$. We can then do the usual Gaussian  elimination on the rows. The $gen$ spanned by the rows will remain unchanged with each operation (swopping rows, scaling rows, adding multiples of a row to another). When the algorithm terminates, we obtain a matrix $W \in R^{k \times n }$ in reduced row-echelon form (denoted by $RREF(V)$). Let the non-zero rows of $W$ be denoted by $w_1,w_2,\ldots,w_t$ where $t \leq k$.
\begin{itemize}
\item[Case 1:] Suppose that every column has at most one non-zero entry, then
\begin{equation*}
	gen(v_1,\ldots,v_k)=gen(w_1,\ldots,w_t)=w_1R+w_2R+\cdots+w_tR
\end{equation*} 
where the sum is direct. In this case we are done. 
\item[Case 2:] Suppose that the $j$-th column is the first column that has two non-zero entries, say $w_r^j\neq 0 \neq w_s^j$ with $r<s$, (we necessarily have $r,s\leq j$) where $w_r^j$ is the $j$-th entry  of row $w_r$ and $w_s^j$  the $j$-th entry  of row $w_s.$
Let $ \alpha, \beta, \gamma \in R$ such that $(\alpha +\beta) \lambda \neq \alpha  \lambda  + \beta  \lambda .$
We apply what we will call the \textsl{distributivity trick}:

Let $\alpha'= (w_r^j)^{-1}\alpha$ and $\beta'= (w_s^j)^{-1}\beta$. Then consider the new row
\begin{equation*}
	\theta=(w_r \alpha' + w_s  \beta') \lambda - w_r  (\alpha' \lambda)-w_s (\beta ' \lambda). 
\end{equation*}
Since $\theta \in LC_2(w_r,w_s)$ we have $\theta\in gen(w_1,\ldots,w_t)$.

 For $ 1 \leq l < j,$ either $w_r^l$ or $w_s^l$ is zero because the $j$-th column is the first column that has two non-zero entries, thus $\theta^l=0$. Note that by the choice of $\alpha,\beta,\lambda$, we have 
\begin{align*}
\theta^j &=(w_r^j)\alpha ' + w_s^j \beta') \lambda - (w_r^j\alpha ') \lambda- (w_s^j\beta ') \lambda \\
&=\big ( w_r^j (w_r^j)^{-1} \alpha + w_s^j(w_s^j)^{-1}  \beta \big ) \lambda- \big (  w_r^j ( w_r^j )^{-1} \alpha \big ) \lambda - \big (  w_s^j ( w_s^j )^{-1} \beta \big ) \lambda \\
&=(\alpha +\beta) \lambda - \alpha \lambda - \beta \lambda \neq 0.
\end{align*}
It follows  that $\theta^j\neq 0$.
Hence $ \theta =(0, \ldots, 0,\theta^j,\theta^{j+1},  \ldots, \theta^n )$. We now multiply the row $\theta$ by $ (\theta^j)^{-1},$ obtaining the row $\phi=(0, \ldots, 0,1,\theta^{j+1}(\theta^j)^{-1},  \ldots, \theta^n (\theta^j)^{-1}) \in gen(w_1,\ldots,w_k)$ where  $\phi ^j=1$ is  the pivot that we have created.

 As a next step, we form a new matrix of size $(t+1) \times n $ by adding $\phi$ to the rows $w_1, \ldots,w_t.$ On  this augmented matrix we replace the rows $w_r, w_s$  with $ y_r=w_r-(w_r^j) \phi, y_s=w_r-(w_s^j) \phi$ respectively. This yields another new matrix  composed of  the rows $w_1,\ldots,w_{r-1},y_r, \ldots, y_s, \phi, w_{s+1},  \ldots,w_t $ which has only one non-zero entry in the $j$-th column. By Lemma \ref{l1}, the  \textit{gen} of the rows of the augmented matrix is the  \textit{gen} of the rows of $W$ (which in turn is  $gen(v_1,\ldots,v_k)$).
Hence
\begin{align*}
gen(v_1, \ldots,v_k) & = gen(w_1,\ldots,w_r, \ldots, w_s, \ldots,w_t)\\
&= gen(w_1,\ldots,y_r, \ldots, y_s, \phi, \ldots,w_t).
\end{align*}

Continuing this process, we can eliminate all columns with more than one non-zero entry. Let the final matrix have rows $u_1,u_2,\ldots, u_{k'}$. Then
\begin{equation*}
	gen(v_1,\ldots,v_k)  =gen(w_1,\ldots,w_t)=gen(u_1,\ldots,u_{k'})=u_1R+u_2R+\ldots+u_{k'}R, 
\end{equation*} 
where the sum is direct.
\end{itemize}
\end{proof}

\newpage

\section{Representation of linear maps}
In vector spaces, matrix representation is motivated by the fact that, for a given basis, a linear mapping is well-defined when we specify the images of all elements in the basis. In a vector space $M_R$ with a basis $X={x_i,i\in I}$, we can define a linear mapping $T$ by specifying the images of each element of $X$. This can be expressed as $\left(\sum_{i\in I}x_ir_i\right)T= \left(\sum_{i\in I}x_iT\right)r_i$. Any matrix can serve as a representation of such a linear mapping, where each column of the matrix corresponds to the image of an element in the basis.

In a vector space, a linear mapping is uniquely determined when we specify its behavior on a basis. However, in a near-vector space, setting the image of the elements of a basis and following the same rules as in vector spaces may lead to a function that is not a linear mapping.

We consider the mapping $T$ from $R^2$ to itself, where $R$ is the Dickson Nearfield $DN(3,2)$. It is defined as follows: $(0,1)T=(1,1)$, $(1,0)T=(1,2)$, and for all $(a,b)$ in $R^2$, $(a,b)T=(0,1)Tb+(1,0)Ta$. Under this mapping, we find that $(1,X)TX=(X+2,X+1)$  but $(1,X)XT=(2X+1,2X+2)$ for $X$ satisfying $X^2+1=0$.

\subsection{Case for $DN(3,2)^2$}

In this section, we will examine the case of $DN(3,2)$ and its finite-dimensional near-vector spaces.

We will start by considering $R^2$, and let $\mathcal{M}$ be the set of all mappings from $R^2$ to itself. These mappings are defined by specifying the images of $(1,0)$ and $(0,1)$, and for all $(x_1,x_2)$ in $R^2$, $(x_1,x_2)T= (1,0)Tx_1+(0,1)Tx_2$. It's worth noting that $|\mathcal{M}|= 6561$. We see that. 

\begin{align*}
(x_1,x_2)T +(x_1',x_2')T&=  (1,0)Tx_1+(0,1)Tx_2 +(1,0)Tx_1'+(0,1)Tx_2'\\
&=   (1,0)Tx_1+(1,0)Tx_1' +(0,1)Tx_2 +(0,1)Tx_2'\\
&=(1,0)T(x_1+x_1')+(0,1)T(x_2+x_2') \text{ (left distributivity)},
\end{align*} 

but
\begin{align*}
((x_1,x_2)T )r= ( (1,0)Tx_1 + (0,1) Tx_2)r\\
((x_1,x_2) r)T  = (x_1r,x_2r)T=  (1,0)Tx_1r + (0,1) Tx_2r
\end{align*}

which can be different because of the lack of right distributivity. We want to explore the characterizations of linear mappings.

\begin{pro} \label{family}
 If $T$ in $\M$ is  defined by $(1,0)T=a$ and $(0,1)T=b$ is a linear mapping then all mappings $T'$ of $\M$ defined by $(0,1)T'=ar$ and $(0,1)T'=br'$ is a linear mapping also.
\end{pro}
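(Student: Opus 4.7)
The plan is to reduce linearity of $T'$ to linearity of $T$ by recognizing $T'$ as $T$ composed with a simple coordinate-wise scalar map. Throughout I will use that in the right $R$-module $R^2$ the axioms $(v\alpha)\beta = v(\alpha\beta)$ and $(v+w)\alpha = v\alpha + w\alpha$ together with left distributivity $v(\alpha+\beta)=v\alpha+v\beta$ hold for all vectors $v,w\in R^2$ and scalars $\alpha,\beta\in R$. By the way $\M$ is defined at the start of the section, every $T'\in\M$ already satisfies additivity (this was verified in the excerpt just before the proposition using only left distributivity); so the only thing to check is the right scalar law $((x_1,x_2)s)T' = ((x_1,x_2)T')s$ for all $(x_1,x_2)\in R^2$ and $s\in R$.

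First I would unpack the definitions: from $(1,0)T'=ar$ and $(0,1)T'=br'$ the membership rule in $\M$ gives
\begin{equation*}
(x_1,x_2)T' \;=\; (ar)x_1 + (br')x_2 \;=\; a(rx_1) + b(r'x_2),
\end{equation*}
where the second equality uses the module axiom $(v\alpha)\beta = v(\alpha\beta)$. The right-hand side is precisely $(rx_1,r'x_2)T$, so $T' = T\circ\sigma$, where $\sigma\colon R^2\to R^2$ is the coordinate rescaling $\sigma(x_1,x_2)=(rx_1,r'x_2)$.

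Next I would verify the right scalar law directly. For $s\in R$,
\begin{equation*}
((x_1,x_2)s)T' \;=\; (x_1 s,\,x_2 s)T' \;=\; a(r x_1 s) + b(r' x_2 s) \;=\; ((rx_1,\,r'x_2)\,s)\,T.
\end{equation*}
Since $T$ is linear by hypothesis, $((rx_1,r'x_2)s)T = ((rx_1,r'x_2)T)\,s = \bigl(a(rx_1)+b(r'x_2)\bigr)s = \bigl((x_1,x_2)T'\bigr)s$, as required. Combined with the additivity already granted by membership in $\M$, this shows $T'$ is linear.

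The only delicate point is ensuring that the rewriting $(ar)x_1 = a(rx_1)$ and $(br')x_2 = b(r'x_2)$ is legitimate in our setting; this is exactly the associativity axiom of the right nearring module $R^2_R$, so no distributivity on the wrong side is invoked. Apart from this, the argument is a straightforward module manipulation, and I expect no genuine obstacle — the whole content is that pre-composing a linear map with the obviously linear map $\sigma$ preserves linearity, and the module axioms allow one to package the scalars $r,r'$ either on the left (against coordinates) or on the right (against the vectors $a,b$) interchangeably.
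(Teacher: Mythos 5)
Your proof is correct and follows essentially the same route as the paper: both rest on the identity $(x_1,x_2)T' = a(rx_1)+b(r'x_2) = (rx_1,r'x_2)T$ and then transfer the right scalar law from $T$ to $T'$ via linearity of $T$. The only cosmetic difference is that you invoke the earlier observation that every element of $\M$ is additive, while the paper re-derives additivity of $T'$ directly from the same identity.
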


\begin{proof}
\begin{align*}
(x_1,x_2)&=(1,0)x_1+(0,1)x_2\\
(x_1,x_2)T'&=arx_1+br'x_2=(rx_1,r'x_2)T\\
((x_1,x_2)+(x_1',x_2'))T'&= (x_1+x_1',x_2+x_2')T'\\
&=(r(x_1+x_1'),r'(x_2+x_2') )T\\
&=  (rx_1,r'x_2)T+(rx_1',r'x_2')T\\
&= (x_1,x_2)T'+(x_1',x_2')T'\\
((x_1,x_2)T')\lambda &=(rx_1,r'x_2)T \lambda\\
&= (rx_1\lambda,r'x_2\lambda)T \\
&=(x_1\lambda,x_2\lambda)T'\\
&=  (x_1 ,x_2)\lambda T'
\end{align*}

\end{proof}

We will define the representation matrix of a linear mapping as it is used in vector space, and we see that for $R^2$, the representation matrix of a linear mapping is has at most one non-zero element in each row.

Next, we determine the normal linear mappings, and we find that their representation matrices have at most one non-zero element in each row and each column. There are a total of 161 normal linear mappings. We find that the number of linear mappings in $R^2$ is 289.

    \subsection{Generalization}

Let $R$ be a nearfield and $n$ an integer. We will consider the near vector space  $R^n$ with a basis $X=\{x_1,\ldots, x_n\}$. 

Let $\cal M$ be the set of all mappings of $R^n$ to itself such that $x_iT=a_i$ for $i=1,\ldots, n$ where $a_i$ is an element in $R$ . For any element $m$ of $R^n$ such that $m=\sum_{i=1}^n x_ir_i$, $r_i$ in $R$, we have $mT= \sum_{i=1}^n a_ir_i$. 

Such mapping can be represented as a matrix $n\times n$ over the nearfield $R$ by setting as columns the $a_i$ as it is in vector space. Let us first prove that the element of $\cal M$ are homomorphisms of the additive group $(R^n,+)$. 

\begin{lem}
Let $T$ be an element of $\cal M$, then $T$ is an homomorphism of the additive group $(R^n,+)$. 
\end{lem}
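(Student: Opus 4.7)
The plan is to expand both sides in the basis $X$, apply the definition of $T$, and then reduce the identity $(m+m')T = mT+m'T$ to two ingredients that are already available: left distributivity of the module action (which is built into the definition of a nearring module) and the fact that elements of distinct summands in a direct sum commute (Proposition \ref{rp}). Since $R^n \cong \bigoplus_{i=1}^n x_i R$, every vector has a \emph{unique} representation $m=\sum_{i=1}^n x_i r_i$ with $r_i \in R$, so $T$ is well-defined and the representation $mT=\sum_{i=1}^n a_i r_i$ makes unambiguous sense.

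First I would fix $m=\sum_{i=1}^n x_i r_i$ and $m'=\sum_{i=1}^n x_i r_i'$ and show that
\begin{equation*}
m+m' \;=\; \sum_{i=1}^n x_i(r_i+r_i').
\end{equation*}
This uses two facts. The summands $x_i R$ and $x_j R$ are distinct submodules in the direct sum decomposition of $R^n$, so by Proposition \ref{rp} elements chosen from different $x_i R$ commute under $+$; this allows me to regroup the mixed sum $\sum x_i r_i + \sum x_j r_j'$ into $\sum_i (x_i r_i + x_i r_i')$. Then the left distributive law of the module, $x_i r_i + x_i r_i' = x_i(r_i+r_i')$, collapses each pair.

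Next I would apply $T$ using its defining formula: since $m+m'$ is expressed in the basis $X$ with coefficients $r_i+r_i'$,
\begin{equation*}
(m+m')T \;=\; \sum_{i=1}^n a_i(r_i+r_i') \;=\; \sum_{i=1}^n (a_i r_i + a_i r_i'),
\end{equation*}
where the last equality is again left distributivity, this time applied to the $a_i \in R^n$ (so really inside each irreducible summand supporting $a_i$; left distributivity $v(r+s)=vr+vs$ holds for every $v$ in the nearring module). Finally I would use Proposition \ref{rp} once more, in the form that the summands permute, to rearrange
\begin{equation*}
\sum_{i=1}^n (a_i r_i + a_i r_i') \;=\; \sum_{i=1}^n a_i r_i \;+\; \sum_{i=1}^n a_i r_i' \;=\; mT + m'T,
\end{equation*}
completing the proof.

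The only delicate step is the second application of commutativity: the terms $a_i r_i$ need not lie in distinct $x_i R$ (the images $a_i$ can be arbitrary vectors), so Proposition \ref{rp} does not directly apply to them. However, the additive group of a nearfield is abelian by Theorem \ref{tt}, and hence so is the additive group of $R^n$; this is what legitimises the rearrangement. This abelianness of $(R^n,+)$ is really the main, if mild, obstacle to making the argument rigorous, and once it is invoked the rest is a routine unwinding of the definitions.
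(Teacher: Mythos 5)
Your proof is correct and follows essentially the same route as the paper's: both reduce the claim to left distributivity of the module action together with commutativity of the additive group, yours merely running the chain from $(m+m')T$ to $mT+m'T$ instead of the reverse. Your extra care about well-definedness and about why the rearrangement of the $a_i r_i$ terms needs abelianness of $(R^n,+)$ (rather than the direct-sum permutation result) only makes explicit what the paper's one-line citation of commutativity leaves implicit.
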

\begin{proof}

Let $m_1,m_2$ be in $R^n$,

\begin{align*}
m_1T+m_2T&= \sum_{i=1}^n a_{i1}r_{i1}+\sum_{i=1}^n a_{i2}r_{i2}\\
&=\sum_{i=1}^n (a_{i1}r_{i1}+a_{i2}r_{i2})&\text{(by the commutativity of the additive group)}\\
&=\sum_{i=1}^n a_{i1}(r_{i1}+r_{i2})&\text{ (by the left distributivity)}\\
&=(m_1+m_2)T
\end{align*}
\end{proof}

Now we want to see which of those mapping are linear. Then, let us denote $M$ to be   the matrix representation of $T$.
 
 \begin{thm} \label{characoflinear}
Let $T $ be an element of $M$, then  b $T$ is linear if and only if the matrix representation associated with $T$ has at most one non-zero element in each row.
\end{thm}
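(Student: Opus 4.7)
Since the preceding lemma already establishes that every $T\in\M$ is a homomorphism of $(R^n,+)$, the only thing left to verify is the scalar-compatibility condition $(mr)T=(mT)r$ for all $m\in R^n$ and $r\in R$. My plan is to reduce this to a per-row identity by expanding both sides in the direct-sum decomposition $R^n=\bigoplus_{i=1}^n x_iR$, and then observe that the identity is precisely the assertion that $r$ right-distributes over the row-$j$ sum --- which holds unconditionally when that sum has at most one nonzero summand, and fails when it has two.

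Concretely, write $a_i=x_iT=\sum_{j=1}^n x_j M_{ji}$, so that the matrix representation of $T$ has columns $a_1,\ldots,a_n$ and row-$j$ entries $(M_{j1},\ldots,M_{jn})$. For $m=\sum_i x_ir_i$, Lemma~\ref{lemm} applied to this direct sum gives $mr=\sum_i x_i(r_ir)$, hence
\[
(mr)T=\sum_{i=1}^n a_i(r_ir)=\sum_{j=1}^n x_j\Bigl(\sum_{i=1}^n (M_{ji}r_i)r\Bigr),
\]
where I have used the definition of $T$ together with the associativity $M_{ji}(r_ir)=(M_{ji}r_i)r$ in $R$. On the other hand, writing $mT=\sum_j x_j\bigl(\sum_i M_{ji}r_i\bigr)$ and applying Lemma~\ref{lemm} once more,
\[
(mT)r=\sum_{j=1}^n x_j\Bigl(\bigl(\sum_{i=1}^n M_{ji}r_i\bigr)r\Bigr).
\]
Equating coefficients of each $x_j$, linearity of $T$ becomes equivalent to the per-row identity $\bigl(\sum_{i=1}^n M_{ji}r_i\bigr)r=\sum_{i=1}^n (M_{ji}r_i)r$ for every $j$ and all $r_i,r\in R$.

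The easy direction is then immediate: if row $j$ has at most one nonzero entry, the left-hand sum collapses to a single term and the identity is trivially satisfied for that $j$. For the converse I would argue by contrapositive. Suppose row $j$ contains two nonzero entries $M_{ji_1},M_{ji_2}$. Since $R$ is a proper nearfield, I can pick $\alpha,\beta,\lambda\in R$ with $(\alpha+\beta)\lambda\neq\alpha\lambda+\beta\lambda$; taking $r_{i_1}=M_{ji_1}^{-1}\alpha$, $r_{i_2}=M_{ji_2}^{-1}\beta$, $r_k=0$ for $k\notin\{i_1,i_2\}$, and $r=\lambda$ reduces the two sides of the row-$j$ identity to $(\alpha+\beta)\lambda$ and $\alpha\lambda+\beta\lambda$ respectively --- precisely the distributivity trick used in the proof of Theorem~\ref{thm:ege}, now repurposed to witness that $T$ fails to be linear.

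The main obstacle is simply keeping the direction of distributivity straight and invoking Lemma~\ref{lemm} on the correct side: left distributivity holds everywhere, and the whole point of the theorem is that right distributivity must be engineered by ensuring at most one summand survives in each row. Once the linearity condition is repackaged as the per-row identity above, both implications are short, and the forward direction is essentially the same observation that powers the EGE algorithm.
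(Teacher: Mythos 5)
Your proposal is correct and follows essentially the same route as the paper's proof: reduce linearity to the componentwise identity $\bigl(\sum_i M_{ji}r_i\bigr)r=\sum_i (M_{ji}r_i)r$ for each row, note it holds trivially when a row has at most one nonzero entry, and in the converse direction use the distributivity trick with $r_{i_1}=M_{ji_1}^{-1}\alpha$, $r_{i_2}=M_{ji_2}^{-1}\beta$ to produce $(\alpha+\beta)\lambda\neq\alpha\lambda+\beta\lambda$ as a witness. The only cosmetic difference is that you justify the coefficient comparison explicitly via Lemma~\ref{lemm} and argue the converse by contrapositive rather than by contradiction, which does not change the substance.
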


\begin{proof}
Let $T$ be in $\M$, $m=\sum_{i=1}^{n}x_ir_i$ in $R^n$ and $r$ in $R$.
Let $M = \begin{pmatrix} M_{11}& \dots& M_{1n}\\
\vdots& \ddots&\vdots\\
M_{n1}&\ldots & M_{nn}\end{pmatrix}$.

Then,

$mTr= \begin{pmatrix}(\sum_{j=1}^{n} M_{1j}r_j)r \\
\vdots \\
(\sum_{j=1}^{n} M_{nj}r_j)r  \end{pmatrix}$ and $mrT= \begin{pmatrix}\sum_{j=1}^{n} T_{1j}r_jr \\
\vdots \\
\sum_{j=1}^{n} T_{nj}r_jr  \end{pmatrix}$

So by definition of linear mapping, $T$ is linear if and only if,  $mMr=mrM$ which means  $\sum_{j=1}^{n} T_{ij}r_jr= (\sum_{j=1}^{n} T_{ij}r_j)r $ for all $i$ in $\{1,\ldots,n\}$.

If the  $M$ has only one non-zero element, then it is obvious that the equality is true.

Conversely, let us suppose that $T$ is a linear mapping, i.e $\sum_{j=1}^{n} T_{ij}r_jr= (\sum_{j=1}^{n} T_{ij}r_j)r $ for all $i$ in $\{1,\ldots,n\}$ and there exist one row containing more than one element.

Let $i$ be such row and let $s,t$ the be the   first two columns such that the component is not $0$.  We know that because of the lack of right distributivity, we have elements $\alpha, \beta$ and $\gamma$ in $R$ satisfying $(\alpha+\beta)\gamma\neq \alpha\gamma+\beta\gamma$.

Let $m$ be the element of $R^n$ which component are $0$ except at the $s^{th}$ and $t^{th}$ the  component which are are respectively $r_s=M_{is}^{-1} \alpha$ and $r_t=M_{it}^{-1} \beta$.

Then the $i^{th}$ component of  $(mT)\gamma$   is  
 
\begin{align*}
((mT)\gamma)_i&=\left( \sum_{j=1}^{n} M_{ij}r_j\right)\gamma\\
&=(\alpha+\beta)\gamma
\end{align*} since $M_{ij}r_j=0$ for all $j\notin \{s,t\}$ by definition of $m$ and we have \begin{align*}M_{is}r_s=  M_{is} M_{is}^{-1} \alpha=\alpha\\M_{it}r_t=  M_{it} M_{it}^{-1} \beta=\beta.
\end{align*}

and the $i^{th}$ component of  $(m\gamma T)$   is  \begin{align*}
((m\gamma T))_i&=\left( \sum_{j=1}^{n} M_{ij}r_j\gamma \right)\\
&=(\alpha\gamma+\beta\gamma)
\end{align*}
 since $M_{ij}r_j\gamma=0$ for all $j\notin \{s,t\}$ by definition of $m$ and we have \begin{align*}M_{is}r_s\gamma=  M_{is} M_{is}^{-1} \alpha\gamma=\alpha\gamma\\M_{it}r_t\gamma=  M_{it} M_{it}^{-1} \beta\gamma=\beta\gamma.
\end{align*}
By the choice of $\alpha, \beta$ and $\gamma$, we have $m\gamma T \neq (mT)\gamma$ which is a contradiction. So all rows have at most one non-zero component.
\end{proof}

\begin{thm}
Let $T$ be a linear mapping. Then $T$ is normal if and only if,  the matrix representation $M$ of $T$ 
has at most one non-zero element in each row and   column.
\end{thm}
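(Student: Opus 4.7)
My plan is to combine the characterization of linearity from Theorem \ref{characoflinear} (at most one non-zero entry per row) with an analysis of when the image $T(R^n)$ is a submodule of $R^n$ rather than just an $R$-subgroup. Since these two notions diverge precisely because right distributivity fails in a proper nearfield, I will interpret ``normal'' to mean that $T(R^n)$ is a submodule of $R^n$; this is consistent with the count of $161$ normal maps reported for $DN(3,2)^2$, which can be verified by direct enumeration of matrices with at most one non-zero entry per row and per column.

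For the forward direction I assume $M$ has at most one non-zero entry in every row and every column. Then there is a partial injection $\sigma$ on $\{1,\ldots,n\}$ such that $x_j T = M_{\sigma(j),j}\,x_{\sigma(j)}$ whenever this is non-zero and $x_j T = 0$ otherwise. Using Lemma \ref{lemm} to expand $T(y)=\sum_j (x_j T)\,y_j$, I obtain
\[
T(R^n) \;=\; \bigoplus_{j\in \mathrm{dom}(\sigma)} x_{\sigma(j)} R,
\]
a direct sum of irreducible submodules of $R^n$, hence a submodule, so $T$ is normal in the intended sense.

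For the reverse direction I argue contrapositively. Suppose $T$ is linear but some column $s$ contains two non-zero entries $M_{i_1,s}$ and $M_{i_2,s}$ with $i_1\ne i_2$; by Theorem \ref{characoflinear} these are the unique non-zero entries in rows $i_1$ and $i_2$. Since $R$ is a proper nearfield, pick $\alpha,\beta,\gamma\in R$ with $(\alpha+\beta)\gamma\ne \alpha\gamma+\beta\gamma$, and set $c = M_{i_1,s}^{-1}\beta$, $m = \alpha\,x_{i_1}$, $n = T(x_s c)$, $r=\gamma$. Then $n_{i_1}=\beta$ and $n_{i_2} = M_{i_2,s}M_{i_1,s}^{-1}\beta$. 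Expanding $(m+n)r - mr$ component-wise via Lemma \ref{lemm}, the $i_1$- and $i_2$-components are respectively $(\alpha+\beta)\gamma - \alpha\gamma$ and $M_{i_2,s}M_{i_1,s}^{-1}\beta\gamma$. For this vector to lie in $T(R^n)$ the two components must equal $M_{i_1,s}y'_s$ and $M_{i_2,s}y'_s$ for a single $y'_s\in R$; equating the two expressions for $y'_s$ yields the forbidden identity $(\alpha+\beta)\gamma = \alpha\gamma + \beta\gamma$, so $T(R^n)$ fails to be a submodule.

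The hard part is the reverse direction, specifically the construction of the witnesses $(m,n,r)$. The difficulty is that $M_{i_1,s}$ is fixed by $T$ rather than freely chosen, so it cannot simply be identified with an element of the distributivity-failing triple; my fix is to absorb $M_{i_1,s}$ into the scalar $c = M_{i_1,s}^{-1}\beta$ multiplying $x_s$, so that $n_{i_1}$ can be made equal to any prescribed $\beta\in R$. With this in place, the fact that rows $i_1$ and $i_2$ of column $s$ share the single variable $y'_s$ turns the membership test for $(m+n)r-mr$ into two constraints whose compatibility is exactly the right-distributivity equation we have assumed to fail, closing the contrapositive.
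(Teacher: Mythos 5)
Your proof is correct, and while the ``if'' direction coincides with the paper's (under the same reading of \emph{normal} as ``the image $R^nT$ is a subspace/submodule of $R^n$,'' which is also what the paper's proof uses), your ``only if'' direction takes a genuinely different route. The paper argues abstractly: it invokes the classification of subspaces of $R^n$ (Theorem 6.1 of \cite{djagbahowell18}) to place $a_sR$ inside the subspace $A=e_{i_1}R+\cdots+e_{i_k}R$, asserts that, $a_sR$ being a proper $R$-subgroup of $A$, there \emph{exist} $m\in A$, $a\in a_sR$, $r\in R$ with $(m+a)r-mr\notin a_sR$, and then checks this element cannot lie in the rest of the image. You instead construct the witnesses explicitly from a right-distributivity-failing triple $(\alpha,\beta,\gamma)$ -- essentially the EGE ``distributivity trick'' -- absorbing the fixed entry $M_{i_1,s}$ into the scalar $c=M_{i_1,s}^{-1}\beta$, and you exploit that rows $i_1,i_2$ of the image share the single parameter $y_s$ so that membership forces $(\alpha+\beta)\gamma=\alpha\gamma+\beta\gamma$. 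This buys two things: the argument is self-contained (no appeal to the subspace classification theorem), and it supplies the detail the paper leaves implicit, since ``proper $R$-subgroup'' alone does not imply failure of the submodule condition (e.g.\ $e_1R$ is a proper $R$-subgroup and a submodule of $R^2$); the paper's ``hence there exist\ldots'' step really needs either the classification of subspaces or exactly the explicit witness you build. Two minor points to tighten: state up front that $R$ is a proper nearfield (both directions of the characterization, and your choice of $\alpha,\beta,\gamma$, need this, as does the paper implicitly), and add one line in the forward direction justifying that a direct sum of the coordinate submodules $x_{\sigma(j)}R$ is itself a submodule of $R^n$ (a short computation with $(m+n_1+n_2)r-mr$, or a citation of the classification), which is the same level of brevity as the paper's ``clearly a subspace.''
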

\begin{proof}
Let $T$ be a linear mapping with matrix representation $M = \begin{pmatrix} M_{11}& \dots& M_{1n}\\
\vdots& \ddots&\vdots\\
M_{n1}&\ldots & M_{nn}\end{pmatrix}$.

We know from (\cite{djagbahowell18} Theorem  6.1) that all subspace of $R^n$ are in form $e_1R\times \ldots u_kR$ where $e_i$ have only one non-zero component which is $1$.

Then $R^nT= \sum_{i=1}^n a_iR$ and $a_iR\cap  \sum_{j\neq i}  a_jR=\{0\}$ since a row contains at most one non-zero element.

Suppose that all $a_i$ has at most one non-zero component. Then the image of $T$ is $\bigoplus a_iR = \underset{a_k\neq (0,0,\ldots,0)} {\bigoplus}e_kR$ where $e_k$ is obtained by multiplying $a_k$ with the inverse of its non-zero component. So  $R^nT$ is  clearly a subspace.  Thus $T$ is normal. 

Conversely, let us suppose that $R^nT= \sum_{i=1}^n a_iR$ is a subspace and suppose  that the matrix representation $M$ of $T$ has a column $j$ with more than one non zero element say at position $j_1,\ldots, j_k$. 

We know that $a_jR$ is an $R$-subgroup of $R^n$
 which is contained in the subspace $A= e_{j_1}R+\ldots +e_{j_k}R$. 
 where $e_{j_i}$ is a vector with only one non-zero
  components $1$   in position $j_i$ . 
  Then as $A$ is a subspace, 
  it is a near vector space and $a_jR$ is 
  a proper  $R$-subgroup of $A$. 
  
Hence, there exist a non zero element $m$ in 
$A$, $a$ in $a_jR$ and $r$ in $R$ such that 
$(m+a)r-mr$ is not in $a_jR$.

But this element is not as well in $\sum_{i\neq j}  a_iR$  
since $a_i$ has $0$ as component in row $j_l$ for each $l$. 

Thus, there is contradiction and the column have at most one non-zero component.
\end{proof}

The sets of normal linear mappings and linear mappings are not nearrings but are closed under multiplication. In contrast to the theory of vector spaces and Andre near vector spaces, we make the following observations.
 Define

 $L(R^n)=\{ T:R^n\to R^n|T \text{ is normal linear mapping }\}$,  \\
  $Hom(R^n)=\{ T:R^n\to R^n|T \text{ is linear mapping }\}$ and \\
$Iso(R^n)=\{ T:R^n\to R^n|T \text{ is normal linear mapping  and bijective}\}$

\begin{pro} We have 

\begin{itemize}
	\item[(1)] $L(R^n,R^n)$ and $Hom(R^n)$ are not nearrings and does not form Beidleman near vector spaces
	\item[(2)]   $L(R^n)$ and $Hom(R^n)$ are   closed under multiplication.
\end{itemize}
\end{pro}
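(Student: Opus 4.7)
The plan is to settle (1) by a concrete counterexample showing that neither set is closed under the natural pointwise addition of mappings, and (2) by tracking how the characterization of Theorem \ref{characoflinear} propagates through composition.

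For (1), I will exhibit $T_1, T_2 \in L(R^n) \subseteq Hom(R^n)$ whose pointwise sum $T_1+T_2$ fails to be linear. Taking $T_1 = \mathrm{id}_{R^n}$ and $T_2$ the map whose matrix representation has a single nonzero entry equal to $1$ at position $(1,2)$, so that $T_2(r_1, r_2, \ldots, r_n) = (r_2, 0, \ldots, 0)$, both matrices satisfy the row/column characterization of Theorem \ref{characoflinear}, hence $T_1, T_2 \in L(R^n)$. For $m = (r_1, r_2, 0, \ldots, 0) \in R^n$ and $r \in R$,
\begin{align*}
(T_1+T_2)(mr) - \bigl((T_1+T_2)(m)\bigr)r = \bigl((r_1 r + r_2 r) - (r_1+r_2)r,\ 0,\ \ldots,\ 0\bigr).
\end{align*}
Since $R$ is a proper nearfield, right distributivity fails for some triple $(r_1, r_2, r)$ (exactly as exploited in the proof of Theorem \ref{characoflinear}), making the first coordinate nonzero. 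Hence $T_1+T_2$ is not linear, so neither $Hom(R^n)$ nor $L(R^n)$ is closed under pointwise addition; this precludes any nearring, and a fortiori any Beidleman near-vector-space, structure on these sets.

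For (2), I will encode any $T \in Hom(R^n)$ by a partial function $c_T \colon \{1, \ldots, n\} \to \{1, \ldots, n\}$ together with values $v_T(i) \in R$: by Theorem \ref{characoflinear} each row $i$ of the matrix of $T$ is either zero (then I declare $c_T(i)$ undefined and $v_T(i) = 0$) or has a unique nonzero entry $v_T(i)$ in column $c_T(i)$. With $m = (r_1, \ldots, r_n)$, we then have $(mT)_i = v_T(i)\, r_{c_T(i)}$. For the composition,
\begin{align*}
(m T_1 T_2)_i = v_2(i)\,(mT_1)_{c_2(i)} = v_2(i)\,\bigl(v_1(c_2(i))\, r_{c_1(c_2(i))}\bigr) = \bigl(v_2(i)\, v_1(c_2(i))\bigr)\, r_{c_1(c_2(i))},
\end{align*}
using only associativity of multiplication in $R$. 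Thus the $i$-th row of the matrix of $T_1 T_2$ has at most one nonzero entry, placing $T_1 T_2 \in Hom(R^n)$. When $T_1, T_2 \in L(R^n)$, the additional per-column uniqueness means $c_1, c_2$ are injective as partial functions; their composition $c_1 \circ c_2$ is then also injective, so the matrix of $T_1 T_2$ has at most one nonzero entry per column as well, placing $T_1 T_2 \in L(R^n)$.

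The main obstacle shaping both parts is the absence of right distributivity: one cannot identify the matrix of a pointwise sum with the entry-wise sum of matrices, nor carry out matrix multiplication by the standard formula. For the counterexample this failure is precisely what makes the sum fail linearity; for the composition it is sidestepped because the one-nonzero-per-row structure causes each relevant ``inner sum'' to collapse to a single term, so that only associativity of the underlying multiplication in $R$ is needed.
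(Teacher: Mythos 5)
Your proof is correct and follows the same overall strategy as the paper---a counterexample to additive closure for (1), and preservation of the one-nonzero-per-row (resp.\ per-row-and-column) matrix structure under composition for (2)---but the execution differs in ways worth comparing. For (1) the paper works concretely in $DN(3,2)^2$ with the maps of matrices $\begin{pmatrix}0&0\\0&1\end{pmatrix}$ and $\begin{pmatrix}0&0\\1&0\end{pmatrix}$ and applies the row characterization to the matrix of their sum; your example ($\mathrm{id}$ plus the $(1,2)$-shift) works over any proper nearfield and any $n\ge 2$, and by evaluating $T_1+T_2$ directly on $mr$ versus $(T_1+T_2)(m)r$ you bypass the point, left implicit in the paper, that the pointwise sum must first be seen to lie in $\mathcal{M}$ (i.e.\ to be represented by the entry-wise sum of matrices) before the characterization theorem can be invoked. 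For (2) the paper computes the entries of the matrix product $MM'$, whereas you track the support function $c_T$ and values $v_T$ and verify on vectors that the composite is again of the prescribed single-term form; this makes explicit why matrix multiplication is legitimate here despite the failure of right distributivity (each inner sum collapses to a single term, so only associativity is used), and the injectivity-of-partial-functions argument gives the column condition for $L(R^n)$ cleanly. Two minor remarks: the row-and-column (normality) criterion you attribute to Theorem \ref{characoflinear} is actually the theorem following it, and your argument, like the paper's, tacitly requires $R$ proper and $n\ge 2$, which is consistent with the paper's setting.
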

\begin{proof}
	
	\begin{itemize}
		\item[(1)] We take our example of $DN(3,2)$ and the normal linear mapping $T$ and $T':\quad R^2  \to R^2$  defined by their respective matrices $\begin{pmatrix}
			0&0\\0&1
		\end{pmatrix}$ and $\begin{pmatrix}
			0&0\\1&0
		\end{pmatrix}$, then the matrix of $T+T'$ will be  $\begin{pmatrix}
			0&0\\1&1
		\end{pmatrix}$. So $T+T'$ is not a linear mapping by our characterization in Proposition \ref{characoflinear}. So $L(R^n)$ and $Hom(R^n)$ are not closed under addition. Hence they do not form group under addiction.
		\item[(2)]   Let $M$ and $M'$ be two representative matrices of linear mappings $T$ and $T'$.
		
		We have $(MM')_{ij}= \sum_{k=1}^n M_{ik}M'_{kj} $, if  the row $i$ of $M$ has only zero elements, then $(MM')_{ij}=0$ for all $j$.  If   the row $i$ of $M$ has one non zero element in column $k$, then $(MM')_{ij}$ will be non zero if the element of  row $k$ of the column $j$  is also non zero but such situation occurs at most once since the row $k$ of $M' $ has at most one non zero element. 
		
		So when we do the multiplication $MM'$, we will always have at  most one non-zero element in each row.  Hence $T\circ T'$ is a linear mapping.
		
		Then, suppose  they are both normal, if  the column $j$ has only zero elements, then $(MM')_{ij}=0$ for all $i$.   if at most one row has non zero element, then the product will be obviously normal. Let us suppose there are two rows $i_1,i_2$ with non zero element at columns $k_1,k_2$.
		So we have $k_1\neq k_2$. Else if   the column $j$ of $T'$ has one non zero element in column $k$, then $MM'_{ij}$ will be non zero if the element of  column $k$ of the row $i$ of $M$  is also non zero but such situation occurs at most once since the column $k$ of $M$ has at most one non zero element. 
	\end{itemize}

\end{proof}

\section{Construction of a seed set }

In contrast to the concept of vector spaces, in which there do not exist $k$ vectors that span the entire finite-dimensional vector space $F^m$ when $k<m$ and $F$ is a field, the case of finite-dimensional Beidleman near-vector spaces exhibits a different behavior. Here, there exist vectors that, when combined, span the entire space $R^m$ where $R$ is a proper nearfield. This phenomenon is exemplified in Theorem 5.12 of \cite{djagbahowell18}, where the authors classified the $R$-subgroups of $R^m$ generated from a finite set of vectors.

During the process of explicitly describing the smallest $R$-subgroup containing a given set of vectors, Theorem 5.12 in \cite{djagbahowell18} demonstrated that the union of $p$-linear combinations of these finite sets of vectors is utilized. If there exists a finite set of vectors in $R^m$ such that the smallest $R$-subgroup containing these vectors generates the entire space $R^m$, then there exists a minimum positive integer $p$ for which the $p$-linear combinations of these vectors yield the entire space $R^m$. An intriguing open question pertains to finding tight bounds on positive integers $p$ for which $p$-linear combinations of a finite set of vectors yield the entire space and investigating potential constructions of seed sets that yield finite-dimensional near-vector spaces. We now introduce the following concepts.

\begin{defn}
     A vector $u$ is left multiple of $v$ if there exists $r \in R$ such that $u=rv.$
 \end{defn}
 \begin{defn}
     Let $V \in R^{k \times m}$ be a matrix of $k$ rows and $m$ columns for
     $m\ge 2$. We will say that $V$ is $1$-column independent if for all $ 1 \leq i < j \leq m$, $\alpha \in R$, $v_i \neq \alpha v_j$.
 \end{defn}

\begin{defn}Let $R$ be a finite nearfield.
A finite set of vectors $V= \{v_1,\ldots, v_k \}$ in $R^m$ is called $\gamma$-linearly dependent for some positive integer $\gamma$ if there exists $v_i \in V$ such that $v_i \in LC_{\gamma}(v_1,\ldots,v_{i-1},\widehat{v_i}, v_{i+1},\ldots, v_k)$. We define $V$ to be $\gamma$-linearly independent if $V$ is not  $\gamma$-linearly dependent.
\end{defn}

\begin{defn} Let $m \geq 3$.
Let $R$ be a finite nearfield and $v_1, \ldots, v_k \in R^m$ be a finite set of vectors  such that $k \geq 2$. The set $LC_p(v_1, \ldots, v_k )$  will be called the  $p$-linear combinations of the vectors $v_1, \ldots, v_k $. We define the index of $R$-linearity of $v_1, \ldots, v_k \in R^m$  to be
\begin{align*}
I(v_1, \ldots, v_k )= \min \{ p \in  \mathbb{N} \thickspace : \thickspace LC_p(v_1, \ldots, v_k )= R^m\}
\end{align*}
 the smallest positive integer for which the $p$-linear combination of the vectors 
 
 $v_1, \ldots, v_k $ yields the whole space $R^m$.
 \label{df3}
\end{defn}

Suppose that there exists $V=\{v_1,\ldots, v_k \}$  a finite set of vectors in $R^m$  such that $gen(V)=R^m$. Since $\mathbb{N}$ is an ordered set then $I(V)$ is well-defined.   
\begin{exa}
Taking $m=3$, it has been shown by  Theorem~5.12 in~\cite{djagbahowell18}  that there exists $v_1=(1,0,1)$ and $v_2=(1,1,0)$ in $R^3$ such that $gen(v_1,v_2)=R^3.$ Note that $LC_2(v_1,v_2)=R^3$ and  $LC_1(v_1,v_2) \neq R^3$. Hence $I(v_1,v_2)=2.$
\end{exa}

We have the following interesting observation.

 \begin{thm}
       Let $V^{(t)}$ be the matrix after $t$ steps of the $EGE$ algorithm on the columns vectors 

       $\{ (a_1,\ldots, a_l), \ldots, (b_l, \ldots, b_l)\}$. We have the following:

       \begin{enumerate}
     \item  If $V^{(t)}$ is $1$-column dependant then $V^{(t+1)}$ is also  $1$-column dependant.
     \item      If $V^{t}$ is $1$-column independent then $V^{t+1}$ is also $1$-column independent.
       \end{enumerate}

 \end{thm}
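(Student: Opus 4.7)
The two statements together assert that 1-column independence is an invariant of the EGE algorithm. My plan is to verify, for each atomic operation performed by EGE on a matrix, that a relation of the form $c_i = \alpha c_{i'}$ between two columns is both preserved and reflected by the operation; the theorem then follows directly.

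I would classify the atomic moves of EGE as: (i) swapping two rows, (ii) right-scaling a row by some $\beta \in R^*$, (iii) adding a right-scalar multiple of one row to another, and (iv) the distributivity trick, which appends a new row $\phi = \theta(\theta^j)^{-1}$, where $\theta = (w_r\alpha' + w_s\beta')\lambda - w_r(\alpha'\lambda) - w_s(\beta'\lambda)$, and then clears the $j$-th column of the other rows via instances of (iii). Assume $M_{l,i} = \alpha M_{l,i'}$ for every row index $l$ before a step. For (i) the relation is trivially preserved. For (ii), the only entries that change satisfy $M_{p,i}\beta = (\alpha M_{p,i'})\beta = \alpha(M_{p,i'}\beta)$ by associativity. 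For (iii), the new entries satisfy $M_{q,i} + M_{p,i}\lambda = \alpha M_{q,i'} + \alpha(M_{p,i'}\lambda) = \alpha(M_{q,i'} + M_{p,i'}\lambda)$ by associativity followed by left distributivity.

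The heart of the proof is case (iv). Substituting $w_r^i = \alpha w_r^{i'}$ and $w_s^i = \alpha w_s^{i'}$ into the formula for $\theta^i$, I would repeatedly invoke associativity of the nearfield product, the left distributive law $\alpha(x+y) = \alpha x + \alpha y$, and the identity $\alpha(-x) = -(\alpha x)$ (which follows from $\alpha \cdot 0 = 0$) to factor $\alpha$ out on the left and obtain $\theta^i = \alpha\,\theta^{i'}$. Multiplying on the right by $(\theta^j)^{-1}$ then yields $\phi^i = \alpha\,\phi^{i'}$, so the new row inherits the column relation, and the subsequent eliminations are already covered by (iii). This settles statement (1).

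Statement (2) follows by contrapositive. Operations (i)--(iii) are invertible, so if $V^{(t+1)}$ is 1-column dependent the same computations applied to the inverse operation produce the dependence in $V^{(t)}$. For (iv), the inverse consists of reversing a block of type (iii) operations and then deleting the appended row $\phi$; the latter only shortens each column, so any relation $c_i^{\mathrm{new}} = \alpha\,c_{i'}^{\mathrm{new}}$ survives the restriction. I expect the main obstacle to lie in the bookkeeping for step (iv), specifically in making sure every rewrite keeps the scalar $\alpha$ on the far left of every summand so that no appeal to the failing right distributive law is ever needed; once that is verified, both implications drop out simultaneously.
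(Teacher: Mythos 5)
Your proof is correct, and its engine is the same as the paper's: every row that EGE creates is a right linear combination of existing rows, so a left-multiple relation $c_i=\alpha c_{i'}$ between columns is pushed through each step by pulling $\alpha$ out on the left using associativity and the left distributive law (plus $\alpha\cdot 0=0$, hence $\alpha(-x)=-(\alpha x)$). Where you differ is in the organization, and your version is the more complete one. The paper only treats a generically added row of the form $\bigl(\sum_i v_i\alpha_i\bigr)\lambda$ and, for part (2), one particular shape of the updated matrix; moreover its displayed argument for part (1) runs in the reverse direction (it assumes $z_1^2=s z_1^1$ and concludes $v_i^2=s v_i^1$, an inference that a single linear combination does not justify and which is in any case the contrapositive of part (2) rather than part (1)). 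Your decomposition into atomic operations (swap, right-scaling, adding a right multiple of a row, and the distributivity trick with the actual row $\theta=(w_r\alpha'+w_s\beta')\lambda-w_r(\alpha'\lambda)-w_s(\beta'\lambda)$ followed by the clearing steps) proves part (1) in the correct direction for every operation EGE actually performs, and your part (2), via invertibility of operations (i)--(iii) together with the observation that deleting the appended row $\phi$ only restricts the columns, replaces the paper's ad hoc contradiction computation with a uniform argument. In a final write-up, spell out the computation you sketch for case (iv), namely $\theta^i=\alpha\,\theta^{i'}$ and hence $\phi^i=\theta^i(\theta^j)^{-1}=\alpha\,\phi^{i'}$, since that is the only place where one must check that $\alpha$ stays leftmost so that only left distributivity is invoked; with that detail written down, both statements follow exactly as you claim.
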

 \begin{proof}~We prove the two statements.

 \textbf{1.}   We want to show that if $V^{(t)}$ is $1$-column dependent then $V^{(t+1)}$ is also $1$-column dependent. Consider, Without loss of generality   after $t$ steps of $EGE$,  the following vectors columns $ (v_1^1, \ldots, v_k^1)$ and  $(v_1^2, \ldots, v_k^2)$.   In the $EGE$ process, we take the linear combinations of the rows to form new rows. For example, let $\alpha_1,\ldots, \alpha_k, \lambda \in R.$ Consider $z_1=(\sum_{i=1}^k v_i \alpha_i) \lambda$. Then $z_1^1=(\sum_{i=1}^k v_i^1 \alpha_i) \lambda$. Assume that $z_1^2=sz_1^1$ for some $s \in R$ i.e, the columns $(z_1^1, 0,\ldots, 0)$ and $(z_1^n, 0,\ldots, 0)$ are not $1$-column independent. Then
 \begin{align*}
 (\sum_{i=1}^k v_i^2 \alpha_i) \lambda =s(\sum_{i=1}^k v_i^1 \alpha_i) \lambda
 =(\sum_{i=1}^ks v_i^1 \alpha_i) \lambda
 \end{align*}

 It follows that $v_i^2=sv^1_i$ for $i=1,\ldots,k$. Thus $ (v_1^1, \ldots, v_k^1)$ and  $(v_1^2, \ldots, v_k^2)$ are  $1$-column dependent.

   \textbf{2.} Let assume that there is no $s 
 \in R$ such that $a_i=s b_i$ for all $i.$ In the process of $EGE$ we take the linear combination of the rows to form the new rows. Our additional row is of the form $(\sum_{i=1}^la_i \alpha_i) \lambda,  (\sum_{i=1}^lb_i\alpha_i) \lambda$.  The update matrix of size $(l+1) \times 2$  will be constituted of the columns

  $( a_1-(\sum_{i=1}^la_i \alpha_i) \lambda, a_2, \ldots, a_l, (\sum_{i=1}^la_i \alpha_i) \lambda) $ and $(b_1-(\sum_{i=1}^lb_i \alpha_i) \lambda, b_2, \ldots, b_l, (\sum_{i=1}^lb_i \alpha_i) \lambda)  $. Let's assume that there exists $s$ such that for $1 \leq i \leq l +1$ we have $u_i=sv_i$.  For $i=l+1$ we have, $a_{l+1}=sb_{l+1}$ which implies that $(\sum_{i=1}^la_i \alpha_i) \lambda=s (\sum_{i=1}^lb_i \alpha_i) \lambda$. Hence $\sum_{i=1}^la_i \alpha_i=s (\sum_{i=1}^lb_i \alpha_i)$. Furthermore,
  $$a_1-(a_1-\sum_{i=1}^la_i \alpha_i) \lambda=s(b_1-(\sum_{i=1}^lb_i \alpha_i) \lambda)=sb_1-s(\sum_{i=1}^lb_i \alpha_i)=sb_1- (\sum_{i=1}^la_i \alpha_i)\lambda.$$ It follows that $a_1=sb_1$ which leads to contradiction.

\end{proof}
We also have.
\begin{lem} Let $R$ be a finite nearfield and $V= \{v_1,\ldots, v_k \}$ be a finite set of vectors in $R^m$ and $|R|=t$.  Then $\vert LC_1(V) \vert \leq t ^k$. Furthermore,  if $V$ is  $2$-linearly  independent every element of $LC_1(V)$ is unique, $\vert LC_1(V) \vert = t ^k$ and $k \leq m$.
\label{l3}
\end{lem}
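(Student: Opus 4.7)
The plan is to reduce the whole statement to a counting argument, based on writing every element of $LC_1(V)$ in a canonical form $\sum_{j=1}^{k} v_j \mu_j$ with $\mu_j \in R$. Since $(R,+)$ is abelian (Theorem \ref{tt}), so is $(R^m,+)$, and any expression $\sum_{i=1}^{\ell} w_i \lambda_i$ with $w_i \in V$ can be reordered by which $v_j$ appears. Left distributivity in $R$ applied coordinatewise gives $v_j \lambda + v_j \lambda' = v_j(\lambda+\lambda')$, so repeated occurrences of a fixed $v_j$ collapse to a single term $v_j \mu_j$. Padding any missing $v_j$ with a term $v_j \cdot 0$ (and interpreting the empty sum as $\sum_{j=1}^k v_j \cdot 0 = 0$) produces a surjection $R^k \to LC_1(V)$, $(\mu_1,\ldots,\mu_k) \mapsto \sum_{j=1}^k v_j \mu_j$, which immediately gives $|LC_1(V)| \leq t^k$.

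Next I would show that $2$-linear independence forces this surjection to be injective. Suppose $\sum_j v_j \mu_j = \sum_j v_j \mu_j'$ but $(\mu_j)\neq(\mu_j')$, and set $\nu_j = \mu_j - \mu_j'$. Coordinatewise, left distributivity together with the standard identity $v_j^\ell \cdot 0 = 0$ gives $v_j(-\mu_j') = -v_j \mu_j'$, so subtracting and collecting yields $\sum_{j=1}^k v_j \nu_j = 0$ with some $\nu_i \neq 0$. Isolating this term and right-multiplying by $\nu_i^{-1}$ (which exists since $R$ is a nearfield) gives
\[
v_i \;=\; \Bigl(-\sum_{j\neq i} v_j \nu_j\Bigr)\nu_i^{-1}.
\]
The vector $w = -\sum_{j\neq i} v_j \nu_j$ lies in $LC_1(V \setminus \{v_i\})$, so $v_i = w \cdot \nu_i^{-1}$ is a one-term element of $LC_2(V \setminus \{v_i\})$, contradicting $2$-linear independence. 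Hence the representation is unique and $|LC_1(V)| = t^k$. The bound $k \leq m$ then follows from $LC_1(V) \subseteq R^m$ and $|R^m| = t^m$, which force $t^k \leq t^m$.

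The subtle step is the use of $LC_2$, rather than $LC_1$, in the injectivity argument. One is tempted to distribute $\nu_i^{-1}$ across the sum $\sum_{j\neq i} v_j \nu_j$, but right distributivity fails in a nearfield. The correct move is to treat that sum as a single vector in $LC_1(V \setminus \{v_i\})$ and then rescale it on the right, landing one level higher in $LC_2$; this is precisely why the hypothesis must be $2$-linear rather than $1$-linear independence.
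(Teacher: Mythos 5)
Your proof is correct and follows essentially the same route as the paper: assume two distinct coefficient tuples give the same element of $LC_1(V)$, collect the differences using left distributivity, isolate the offending $v_i$, and right-multiply the remaining $LC_1$ sum by the inverse of the nonzero coefficient so that $v_i \in LC_2(V\setminus\{v_i\})$, contradicting $2$-linear independence; the count $t^k \le t^m$ then gives $k\le m$. Your preliminary canonical-form surjection $R^k \to LC_1(V)$ for the bound $|LC_1(V)|\le t^k$ merely makes explicit a step the paper leaves implicit, not a different method.
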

\begin{proof}
Let $u,v \in LC_1(V)$ such that $u= \sum_{i=1}^kv_i \alpha_i$ and $v= \sum_{j=1}^kv_i \beta_j $ where $(\alpha_1, \ldots, \alpha_k) \neq (\beta_1, \ldots, \beta_k)$. Without loss of generality we can assume that  $\alpha_1 \neq \beta_1.$ Suppose that $u=v$. Then $v_1 \alpha_1-v_1 \beta_1=\sum_{i=2}^kv_i \alpha_i - \sum_{j=2}^kv_i \beta_j.$ It follows that
\begin{align*}
v_1= \big (  \sum_{j=2}^kv_i (\beta_i -\alpha_i \big ) \big )( \alpha_1 - \beta_1)^{-1}.
\end{align*}
 Thus $v_1 \in LC_2(v_2,\ldots,v_k)$. So $V$ is $2$-linearly dependent. We reach to contradiction. Therefore  if $V$ is $2$-linearly  independent then $\vert LC_1(V) \vert =t^k.$ Suppose that  $V$ is $2$-linearly  independent and  $k>m$, we  have  $ \vert LC_1(V) \vert = t^k> t^m$ and all the $t^k$ are distinct. It contradicts the fact that we have at most $t^m$ vectors in the space. Hence $t \leq m.$
\end{proof}

In analogy to the notion of a basis of a subspace in the theory of vector spaces,   we also have $R$-basis and $R$-dimension of an $R$-subgroup of the finite dimensional Beidleman near-vector spaces $R^m$. In the following we count the  $R$-subgroups of $R$-dimension $k$ of $R^m$.

\begin{defn}
  The number of $R$-subgroups  of $R^m$ of dimension $k$ of $R^m$ up to reordering of coordinate is the number of matrix obtained after $EGE$  without reordering the column.    
\end{defn}

 \begin{pro} 
  Let $R$ be a finite nearfield.
  The number of  $R$-subgroups of $R$-dimension $k$ of $R^m$ up to the reordering of coordinates is $$
  \sum _{t=k}^mp_{k}(t) (|R|-1)^{t-k},
 $$ where $p_k(t)$ is the number of partitions of $t$ into $k$ parts and $t$ is the total number of non-zero entries in  all the rows. 
  \label{th6}
 \end{pro}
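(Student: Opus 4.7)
My approach rests on Theorem~\ref{thm:ege}: every $R$-subgroup $T \le R^m$ of $R$-dimension $k$ can be written as $T=\bigoplus_{i=1}^{k} u_i R$, where the rows $u_i$ form a matrix $U \in R^{k \times m}$ each of whose columns contains at most one non-zero entry. Since $\dim T = k$, every row must be non-zero; let $\lambda_i$ denote the number of non-zero entries of $u_i$ and set $t = \sum_{i=1}^{k} \lambda_i$. The strategy is to put $U$ into a canonical form (using the allowed invariances) and then enumerate the resulting matrices stratified by $t$ and by the multiset $\{\lambda_i\}$.

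First I reduce to canonical form using three invariances. Since $u_iR = (cu_i)R$ for any $c \in R^\ast$, I scale each row so that its first non-zero entry equals $1$. Since $\bigoplus u_i R$ is unchanged by permuting the $u_i$, I order the rows so that $\lambda_1 \ge \lambda_2 \ge \cdots \ge \lambda_k$. Finally, under the allowed reordering of coordinates I move the $m-t$ zero columns to the right and arrange the non-zero columns into $k$ contiguous blocks of sizes $\lambda_1, \ldots, \lambda_k$ following the row order. After these reductions $\lambda=(\lambda_1,\ldots,\lambda_k)$ is a partition of $t$ into exactly $k$ parts, and necessarily $k \le t \le m$.

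For fixed $t$ and a fixed partition $\lambda$ of $t$ into $k$ parts, the $\lambda_i$ non-zero entries of row $i$ consist of the forced leading $1$ followed by $\lambda_i - 1$ entries that range freely over $R^\ast$. Hence the contribution of $\lambda$ is
\[
\prod_{i=1}^{k} (|R|-1)^{\lambda_i - 1} \;=\; (|R|-1)^{\,t-k}.
\]
Since there are $p_k(t)$ partitions of $t$ into exactly $k$ parts, the number of canonical matrices with total non-zero-entry count $t$ is $p_k(t)(|R|-1)^{t-k}$, and summing over $t\in\{k,\ldots,m\}$ yields the claimed formula.

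The one step demanding care is verifying that the correspondence between canonical matrices and $R$-subgroups (counted up to coordinate reordering) is a bijection. The partition $\lambda$ is forced by $T$, since the summands $u_iR$ supplied by Theorem~\ref{thm:ege} have pairwise disjoint column supports and hence $\lambda$ is intrinsic to $T$; within each row, the leading $1$ together with the remaining $\lambda_i - 1$ entries of $R^\ast$ determines $u_iR$ uniquely, because two normalized rows generate the same one-dimensional summand only if they agree entry by entry. Formalising this injectivity is the main obstacle; once granted, the enumeration above is routine bookkeeping.
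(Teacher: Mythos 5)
Your argument is essentially the paper's own proof: both stratify the canonical matrices supplied by Theorem~\ref{thm:ege} by the total number $t$ of non-zero entries, use $p_k(t)$ to distribute them among the $k$ rows, and quotient out row scaling --- the paper by dividing $(|R|-1)^{t}$ by $(|R|-1)^{k}$, you by normalizing each leading entry to $1$ --- arriving at $p_k(t)(|R|-1)^{t-k}$ and summing over $k\le t\le m$. One small correction: normalize rows by \emph{right} scaling, $u_i c$ with $c\in R^{*}$, since $(u_i c)R=u_i R$ by associativity of the module action, whereas $(cu_i)R$ need not equal $u_iR$ in a proper (non-commutative) nearfield.
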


 \begin{proof}
     Let $t=\sum_{i=1}^k \# u_i$ where $\# u_i$ is the number of non-zero entries in the row $u_i$. It is clear that $t \leq n.$ For $\# u_i=1$ for all $i=1, \ldots, k$ then $t=k$. Hence $k\leq t \leq n.$ Note that $t$ is the total number of non-zero entries in all columns or in all the rows. 

     Given $t$, we can partition $t$ into $k$ parts where each part (containing some non-zeros entry) will represent each row vector. We have $(|R|-1)^t$ possible choice of non-zero elements from $R^*$ to fill in the $t$ places of each partition.

     Since the $gen$ is unchanged for any permutations of row vectors and is unchanged for any scalar multiplies to others rows, then, for a given $t$, $N=\frac{1}{(|R|-1)^k}p_k(t)(|R|-1)^t$ is the number of $R$-subgroups of dimension $k$ up to reordering of coordinate. For  $k\leq t \leq m$ yield to  $\sum _{t=k}^mp_{k}(t) (|R|-1)^{t-k}$ is the number of  $R$-subgroups of $R$-dimension $k$ of $R^m$ up to the reordering
 \end{proof}

The form of the seed set of $DN(3,2)^m$ for $2 \leq m \leq 9$, as described in \cite{djagbahowell18}, motivates us to seek a possible general construction of seed sets for $R^m$, where $R$ is a nearfield and $m$ is a natural number. Consequently, we have developed Algorithm \ref{firstalgo}  (see in the appendix) to provide such a construction.

We will use the notation $V_m$ to represent the matrix output of Algorithm \ref{firstalgo} for a given input value of $m$, and $S_m$ to denote the ordered set of its row vectors, with ordering determined by their respective row numbers in $V_m$.

First, we found the following:

\begin{thm}
	
	Let $k\geq 1$ be an integer. Then, the maximal value of integer  $m$ such that $V_m$ has $k$-rows is given by the sequence $(u_k)_{k\geq 1}$ defined by: 
	
	\begin{align*}
		u_1&=1\\
		u_{k+1}&=u_k+(|R|-2) k+1
	\end{align*}
	
	The range of values of integer  $m$ such that $V_m$ has $k$-rows is given by:
	
	$\left[u_{k-1}+1,u_k\right]$
	
 \label{rrR}

\end{thm}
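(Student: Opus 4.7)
The strategy is to prove both statements of the theorem simultaneously by induction on $k$, analyzing directly how Algorithm \ref{firstalgo} builds the matrix $V_m$ as $m$ is incremented by one. Throughout, I will use the convention $u_0 := 0$ so the range formula $[u_{k-1}+1, u_k]$ also makes sense for $k=1$.

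For the base case $k=1$, I would observe that $V_1$ is necessarily a single $1 \times 1$ matrix, so $u_1 = 1$. To close the base case I would also verify that $V_2$ already has at least two rows, since any single vector $v \in R^2$ generates at most $|vR| \leq |R|$ elements under the $R$-subgroup operation, which is strictly less than $|R|^2 = |R^2|$; hence a one-row matrix cannot serve as seed set of $R^2$. This matches the range $[u_0+1, u_1] = \{1\}$.

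For the inductive step I would assume the statement for all $j \leq k$ and analyze the transition from $V_{u_k}$ onward. The heart of the argument splits into two subclaims: (i) passing from $m = u_k$ to $m = u_k + 1$ forces the algorithm to introduce a $(k+1)$-th row, contributing the $+1$ in the recurrence; and (ii) once this new row has appeared, exactly $(|R|-2)k$ further increments of $m$ can be absorbed into the existing $k+1$ rows before another row is required. Subclaim (ii) is established using the distributivity trick from the proof of Theorem \ref{thm:ege}: each of the $k$ previously existing rows can be paired with the new row to absorb an additional column, and for each such pairing there are precisely $|R| - 2$ admissible nonzero scalar choices (the nonzero elements of $R$ minus the one which either collides with an existing column pattern or coincides with a distributivity-respecting value), producing $(|R|-2)k$ distinct extensions.

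The sharpness of this count is the main obstacle. For the achievable direction one writes down the $(|R|-2)k$ admissible scalar tuples explicitly and verifies, column by column, that each extension preserves the $1$-column independence of the relevant submatrix, so the algorithm does not need to spawn a new row. For the matching upper bound one argues by contradiction: if a further column could be absorbed without a new row, then the direct-sum structure forced by Theorem \ref{thm:ege} together with a pigeonhole count on admissible coefficients would force either two columns to share the same pattern (contradicting the generation of a fresh $R^m$-coordinate) or a distributivity violation of the type used in the EGE distributivity trick, which in turn compels the algorithm to create a new pivot row. Combining (i) and (ii) yields $u_{k+1} = u_k + (|R|-2)k + 1$ and the range $[u_k+1, u_{k+1}]$ for matrices with exactly $k+1$ rows, closing the induction.
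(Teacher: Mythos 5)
Your proposal attacks a different statement from the one the theorem actually makes. Theorem \ref{rrR} is a claim about the output of Algorithm \ref{firstalgo}: $V_m$ is by definition whatever that algorithm returns, so $u_k$ is determined by the algorithm's explicit loop structure, and the proof has to be a bookkeeping analysis of that structure. The paper's argument is exactly that: to reach $k+1$ rows the algorithm first writes the identity block $I_{k+1}$ and carries over the $u_k$ previously built columns (the new row padded with zeros), so at least $u_k+1$ columns are already present; then the variable $counter$ runs through $k$ admissible values, and for each value the algorithm appends one new column per element of $S=R\setminus\{0,1\}$, i.e.\ exactly $|R|-2$ columns, before it is forced to enlarge the identity block again. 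This gives $u_{k+1}=u_k+1+(|R|-2)k$ with no appeal to generation properties at all, and the base case $u_1=1$ is just the observation that for $m=2$ the while loop ``$numcol<m$'' is entered and a second row is created.

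Your proposal instead tries to deduce the row count from seed-set necessity, and that is where the gap lies. In the base case you argue that $V_2$ must have two rows because a single vector cannot generate $R^2$ -- a true fact, but irrelevant: the algorithm does not test whether generation succeeds, it adds a row because its counter logic says so. In the inductive step you assert that each of the $k$ old rows can ``absorb'' exactly $|R|-2$ further columns via the distributivity trick, with the count justified by excluding scalars that ``collide with an existing column pattern or coincide with a distributivity-respecting value''; this has no concrete content, and the factor $|R|-2$ in the recurrence in fact comes solely from the algorithm filling new columns with the elements of $R\setminus\{0,1\}$, not from any property of the distributivity trick or of Theorem \ref{thm:ege}. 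Finally, your ``matching upper bound'' direction would amount to showing that no $k$-row matrix whatsoever can seed $R^{u_k+1}$, i.e.\ minimality of the seed number -- a strictly stronger claim than the theorem, which your sketch does not establish and which the paper neither proves nor needs for this result. As written, the proposal neither tracks the algorithm (what the theorem is about) nor completes the optimality argument it substitutes for it.
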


\begin{proof}
	Let $u_k$ be the maximal   value of integer  $m$ such that $V_m$ has $k$-rows.  
	
	We see that the construction of $V_m$ is obtained by adding eventual row/column to $V_{m-1}$. Also, we only add additional column when the maximum number of column that can be created with a given number rows is complete.  Thus, if we find $V_{m}$ has number of rows greater than $k$, then $m>k$. And if we find  $V_{m}$ has number of rows less than $k$, then $m<k$.
	
	We have $u_1=1$ because, for $m=1$, Algorithm \ref{firstalgo} return $(1)$, and for $m=2$, as we enter already in the while loop, we add a  new row so $1\leq u_1< 2$. It implies that $u_1=1$ as $u_1$ is an integer.

	Let us now  prove 
	that  $u_{k+1} = u_k+(|R|-2) k + 1$. From the construction, to construct  a matrix of $k+1$ rows, we need to complete the maximal number of columns that can have a matrix of $k$ rows.

	So first, we replace the identity matrix of range $k$ by a new identity matrix of range $k+1$ and complete the row with $0$. Which mean that the minimum number of column for $k+1$ is $u_k+1$. Then, we  will be able to add new column to this new construction as long as the $counter$ does not reach $k-1$ which mean the $counter$ can take $k$ values.  Then, for a given $counter$, we can add $|R|-2$ columns as we remove $0$ and $1$ to $R$.  So, we have $u_{k+1}=u_k+1+(|R|-2)k$.

\end{proof}

Here we give an explicit form of $u_k$ in function of $k$.

\begin{align*}
	u(0)&=0\\
	u(1)&=u(0)+(|R|-2)*0+1\\
	u(2)&=u(1)+(|R|-2)*1+1\\
	\vdots&\vdots
	\\u(k )&=u(k-1)+(|R|-2)*(k-1)+1\\
	u(k )&= (|R|-2)*\sum_{i=0}^{k-1}+k\\
	u(k )&= (|R|-2) \frac{(k-1)*k}{2}+k\\
	u(k )&=  \frac{(|R|-2)(k-1)*k+2 k }{2} \\
	u(k )&=  \frac{\left((|R|-2)(k-1) +2   \right)k }{2} \\
\end{align*}

Solving this second degree equality in regard to $k$ and taking the only one positive value, we have:

\begin{lem}\label{explicitk} From Theorem \ref{rrR}, the explicit expression of $k$ is given by
	$$k= \frac{\mathit{|R|} + \sqrt{\mathit{|R|}^{2} + 8 \, {\left(\mathit{|R|} - 2\right)} u_k - 8 \, \mathit{|R|} + 16} - 4}{2 \, {\left(\mathit{|R|} - 2\right)}}$$.

\end{lem}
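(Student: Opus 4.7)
The plan is pure algebra: the final line of the display immediately preceding the statement already establishes the closed form
$$u_k \;=\; \frac{\bigl((|R|-2)(k-1)+2\bigr)\,k}{2},$$
so it only remains to invert this relation for $k$. The first step is to clear the factor of $2$ and expand the product, producing
$$(|R|-2)\,k^{2} + (4-|R|)\,k - 2u_k \;=\; 0,$$
a quadratic in $k$ with coefficients $a = |R|-2$, $b = 4-|R|$ and $c = -2u_k$.

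Next, I would apply the standard quadratic formula. The discriminant simplifies as
$$b^{2}-4ac \;=\; (4-|R|)^{2} + 8(|R|-2)\,u_k \;=\; |R|^{2} - 8|R| + 16 + 8(|R|-2)\,u_k,$$
which is exactly the expression under the radical in the claimed identity. Writing $-b = |R|-4$ in the numerator then recovers the shape
$$k \;=\; \frac{(|R|-4) \pm \sqrt{|R|^{2} + 8(|R|-2)u_k - 8|R| + 16}}{2(|R|-2)}.$$

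The final step is to select the $+$ branch. Since $R$ is a proper nearfield we have $|R|-2>0$, and since $k\ge 1$ we need the numerator to be strictly positive; the $-$ branch would produce a nonpositive numerator for every $u_k\ge 1$ (as a quick check at $k=1,\;u_k=1$ shows), so it is inadmissible. The $+$ branch yields precisely the formula in the statement, and a sanity check with $|R|=9$, $k=2$, $u_k=9$ returns $k=(5+\sqrt{529})/14 = 2$, confirming the derivation. The only place one could slip is sign bookkeeping, in particular confirming $(4-|R|)^{2}=|R|^{2}-8|R|+16$ and that the positive root is the physically meaningful one; beyond that, there is no real obstacle.
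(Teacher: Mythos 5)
Your proposal is correct and takes essentially the same route as the paper: the paper likewise starts from the closed form $u_k = \frac{\left((|R|-2)(k-1)+2\right)k}{2}$ derived just before the lemma, solves the resulting quadratic in $k$, and keeps ``the only one positive value.'' You simply make explicit the discriminant computation and the rejection of the $-$ branch, which the paper leaves implicit.
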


Then we notice that the function  $f: m\mapsto  \frac{\mathit{|R|} + \sqrt{\mathit{|R|}^{2} + 8 \, {\left(\mathit{|R|} - 2\right)} m - 8 \, \mathit{|R|} + 16} - 4}{2 \, {\left(\mathit{|R|} - 2\right)}} $ is strictly increasing  (because  $\mathit{|R|}^{2} + 8 \, {\left(\mathit{|R|} - 2\right)} m - 8 \, \mathit{|R|} + 16$ is an affine function of $m$ of positive leading coefficient, square root is an increasing function and addition of number followed by multiplication with positive number does not change the  variation of a function) such that $f(u_k)= k $ for any integer $k\geq 1$. 

As we have $f(u_{k-1}) = k-1$ and $f(u_k)=k$, for any element $m$ in $[u_{k-1}+1; k]$ we have $k-1 <f(m)\leq k  $. It means that  any element $m$ is in $[u_{k-1}+1; k]$, $k = ceil(f(m))$. So we have:

\begin{lem}
	
	Let $m$ be a positive integer, then the number of rows of $V_m$ is given by:

	$k=\left\lceil   \frac{\mathit{|R|} + \sqrt{\mathit{|R|}^{2} + 8 \, {\left(\mathit{|R|} - 2\right)} m - 8 \, \mathit{|R|} + 16} - 4}{2 \, {\left(\mathit{|R|} - 2\right)}}\right\rceil $

\end{lem}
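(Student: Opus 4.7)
The plan is to combine the interval characterization from Theorem \ref{rrR} with the explicit inversion from Lemma \ref{explicitk} and a short monotonicity argument. Writing
\[
f(m) = \frac{|R| + \sqrt{|R|^{2} + 8(|R|-2)m - 8|R| + 16} - 4}{2(|R|-2)},
\]
Theorem \ref{rrR} tells me that $V_m$ has exactly $k$ rows precisely when $m\in[u_{k-1}+1,\,u_k]$. So the task reduces to showing $\lceil f(m)\rceil = k$ throughout this interval, which should follow directly from strict monotonicity of $f$ together with $f(u_j)=j$ for every positive integer $j$.

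First I would verify that $f$ is strictly increasing on the nonnegative reals. The radicand is affine in $m$ with positive slope $8(|R|-2)$, which is well-defined and positive since the denominator of $f$ already forces $|R|>2$, an assumption that is automatic for the nearfields under consideration (the smallest proper Dickson nearfield $DN(3,2)$ has order $9$). The square root is strictly increasing on positive inputs, and the outer operations consist only of adding the constant $|R|-4$ and dividing by the positive constant $2(|R|-2)$, so the full composition is strictly increasing. Next, Lemma \ref{explicitk} supplies $f(u_j)=j$ for each positive integer $j$, which is nothing more than the inversion of the closed-form expression for $u_k$ derived immediately before that lemma. Combining the two facts, for $m\in[u_{k-1}+1,\,u_k]$ strict monotonicity gives
\[
k-1 = f(u_{k-1}) < f(m) \leq f(u_k) = k,
\]
hence $\lceil f(m)\rceil = k$, which is exactly the asserted formula.

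I do not expect a serious obstacle here; the one point worth flagging is the asymmetry at the endpoints of the interval $[u_{k-1}+1,\,u_k]$. The right-hand inequality $f(m)\leq k$ must be non-strict so that $\lceil f(u_k)\rceil = k$, while the left-hand inequality $f(m)>k-1$ must be strict so that $\lceil f(u_{k-1}+1)\rceil$ does not collapse to $k-1$. Both behaviors fall out of strict monotonicity together with the exact values $f(u_{k-1})=k-1$ and $f(u_k)=k$, so once the monotonicity step is established the rest of the argument is immediate.
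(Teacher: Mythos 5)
Your proposal is correct and follows essentially the same route as the paper: strict monotonicity of $f$ (affine radicand with positive slope, increasing square root, positive outer scaling), the identity $f(u_k)=k$ from Lemma \ref{explicitk}, and the interval characterization $m\in[u_{k-1}+1,u_k]$ from Theorem \ref{rrR}, giving $k-1<f(m)\le k$ and hence $\lceil f(m)\rceil=k$. Your explicit remark on the endpoint asymmetry and on $|R|>2$ is a small clarification the paper leaves implicit, but the argument is the same.
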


Now, we are ready to give the Theorem:

\begin{thm}
	For any $V_m$ obtained by the Algorithm \ref{firstalgo}, we have that the set of its row vectors $S_m$ is $R$-linearly independent and  $gen(S_m)=R^m$ (i.e., $S_m$ is a seed set of $R^m$). 

 \label{rr}
\end{thm}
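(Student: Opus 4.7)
The plan is to prove Theorem~\ref{rr} by induction on $m$, following the recursive nature of Algorithm~\ref{firstalgo}. Theorem~\ref{rrR} partitions the positive integers into intervals $[u_{k-1}+1,u_k]$, and the algorithm behaves very differently in the boundary case $m=u_{k-1}+1$ (a new row is adjoined as the identity block grows from size $k-1$ to $k$) versus the interior case $u_{k-1}+1<m\le u_k$ (only a new column is adjoined). The base step $m=1$ is immediate from $V_1=(1)$, so after that I would handle the two inductive cases separately.

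\textbf{Row-appending case.} When $m=u_{k-1}+1$, the matrix $V_m$ consists of the old rows of $V_{m-1}$ each padded with a trailing $0$ in column $m$, together with a new row whose only non-zero entry is a $1$ in column $m$. For any target $(w,z)\in R^{m-1}\times R$, the inductive hypothesis applied to the padded rows produces an element of $gen(\cdot)$ equal to $(w,0)$, while $(0,\ldots,0,z)$ is obtained by right-scaling the new row; adding these yields $(w,z)$, so $gen(S_m)=R^m$. Linear independence is inherited from $S_{m-1}$ because the new row is the unique row with a non-zero entry in column $m$.

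\textbf{Column-appending case.} Here $S_m$ has the same cardinality as $S_{m-1}$, and dropping the new column recovers $S_{m-1}$, whose span is $R^{m-1}$ by induction; this gives access to any desired values on coordinates $1,\dots,m-1$. What remains is to produce all of $\{0\}^{m-1}\times R$ inside $gen(S_m)$. I would invoke the distributivity trick used in the proof of Theorem~\ref{thm:ege}: the algorithm has chosen the new-column entries of the two rows indexed by the current counter, together with the scalar $r\in R\setminus\{0,1\}$ associated with this column, precisely so that for some triple $(\alpha,\beta,\lambda)\in R^3$ witnessing $(\alpha+\beta)\lambda\neq \alpha\lambda+\beta\lambda$, the element $\theta=(w_r\alpha'+w_s\beta')\lambda-w_r(\alpha'\lambda)-w_s(\beta'\lambda)$ belongs to $LC_2(S_m)$, vanishes on its first $m-1$ coordinates, and has a non-zero $m$-th coordinate. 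Right-multiplying $\theta$ by arbitrary scalars then sweeps out $\{0\}^{m-1}\times R$, which combined with the inductive span gives $gen(S_m)=R^m$. Independence persists because truncating column $m$ returns $S_{m-1}$, so any vanishing combination of $S_m$ restricts to one on $S_{m-1}$, forcing all coefficients to be $0$.

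The main obstacle is the column-appending case: one must verify that the specific construction of Algorithm~\ref{firstalgo} -- the counter controlling which pair of rows acquires new non-zero entries, and the scalar $r\in R\setminus\{0,1\}$ controlling the values -- really does enable the distributivity trick to annihilate the first $m-1$ coordinates of $\theta$ while leaving a non-zero entry in column $m$. This leans on two features of a proper nearfield $R$: the guaranteed existence of some triple $(\alpha,\beta,\lambda)$ witnessing failure of right distributivity, and the fact that distinct $(c,r)$ pairs give rise to columns whose distributivity-trick outputs populate the $m$-th coordinate with distinct non-zero values, so that each newly adjoined column is genuinely contributing a fresh coordinate direction without clashing with previously handled ones.
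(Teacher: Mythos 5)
Your base case and row-appending case are essentially fine (up to the harmless point that Algorithm~\ref{firstalgo} inserts the new identity column at position $k$ rather than at the end, which only permutes coordinates), and your reduction of the column-appending case to ``surjective projection onto the first $m-1$ coordinates plus a non-zero element of $\{0\}^{m-1}\times R$'' would indeed suffice. The genuine gap is exactly at the step you flag as the main obstacle: a \emph{single} application of the distributivity trick to two rows of $V_m$ does not, in general, produce a vector supported only in column $m$. The vector $\theta=(w_r\alpha'+w_s\beta')\lambda-w_r(\alpha'\lambda)-w_s(\beta'\lambda)$ vanishes in coordinate $l$ only when at most one of $w_r^l,w_s^l$ is non-zero; but every column the algorithm appends beyond the identity block is non-zero in \emph{every} row (entries $1$ in the first $\mathrm{counter}+1$ rows and some $s\in R\setminus\{0,1\}$ in the rest), so any two rows of $V_m$ share non-zero entries in all previously appended columns of the current block, not only in column $m$. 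Concretely, for $R=DN(3,2)$ and $m=4$ one has $V_4=\begin{pmatrix}1&0&1&1\\ 0&1&s_1&s_2\end{pmatrix}$, and the trick applied to its two rows for column $4$ leaves $\theta^3=(\alpha+s_1s_2^{-1}\beta)\lambda-\alpha\lambda-(s_1s_2^{-1}\beta)\lambda$, which nothing in your argument forces to vanish while keeping $\theta^4\neq0$. Your claimed property of $\theta$ holds only for the first column appended after a row-appending step, where the newest row meets the older rows solely in column $m$.

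Because of this, obtaining $\{0\}^{m-1}\times R$ requires tracking the iterated elimination, not one trick: the paper's proof runs the EGE algorithm of Theorem~\ref{thm:ege} on the whole of $V_{m+1}$, uses the inductive hypothesis to assert that the sub-matrix coming from $V_m$ already yields $m$ pivot rows, and then argues that the new column contributes exactly one additional pivot row, so the final matrix has $m+1$ single-pivot rows and $gen(S_{m+1})=R^{m+1}$. To repair your argument you would have to reproduce that bookkeeping (first clearing the earlier columns shared by your two chosen rows, which reintroduces the full EGE induction), or otherwise prove that the EGE normal form of $gen(S_m)$ acquires a pivot in column $m$; your closing claim that distinct pairs $(\mathrm{counter},s)$ produce ``distinct non-zero values'' in the $m$-th coordinate is neither established nor what is needed. (Your independence argument by truncating the last column is fine in spirit, but should be phrased in terms of the paper's $\gamma$-linear independence rather than a ``vanishing combination''.)
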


\begin{proof}
Note that $S_m$ is $R$-linearly independent since by the construction of $V_m$ it is in reduced row echelon form.  We will do the proof of $gen (S_m)=R^m$ by induction in $m$ For $m=1$, we know that $\{1\}$ generates $R^m$. Let us assume that $gen(S_m)=R^m$ for  $m\geq 1$.	
	We have 2 cases:
 
	\textit{Case 1:}   $m=u(k)$ for a certain value of $k$ and $m+1=u(k)+1$. From the algorithm described in the Appendix,  the first $k$ rows of $V_{m+1}$ is obtained by inserting one $0$ in the $(k+1)^{th}$ position of every rows of $V_m$ of the same position.  When we perform the EGE Algorithm on the the  matrix $V_{m+1}$, 
		the first $k $ columns and the  will behave bijectively as for $V_m$. The new rows will have the component $0$   in the $k+1^{th}$ column since the $k+1^{th}$ components of each above vector are all $0$ and the last row will not be involved in the distributivity trick (by Theorem~5.12 in~\cite{djagbahowell18}, we see that the first two non-zero elements of the column $j$ will all have number of row less than $j$). For the $k+1^{th}$ column, we don't need to add any additional row since it has only one zero component which is already the only one in its column. The algorithm will work on remaining columns  with the same behaviour as for $V_m$ for it sub-matrix from column $k$. It means that the $m $ columns will will generates $m-k$ additional rows (By our hypothesis $gen(S_m)$) and in the end of the EGE Algorithm  we have $k+1 +m-k= m+1  $ rows. Thus, $gen(S_{m+1})=R^{m+1}$.		
		
  \textit{Case 2:} $m$ and $m-1$ is in the same interval  $\left[u(k-1)+1, u(k)\right] $. From the algorithm described in the Appendix, each row vector of $V_{m+1}$ is obtained by  adding only one $1$ or an element $s$ of $S$ in its last column. It means that the column $1$ to $ m$ generate $m-k$ additional vector in the running of the EGE algorithm.  The last column generate only one new non-zero row  $(0, \ldots,1)$ by definition of the $AGE$ algorithm, and we will not delete any of the previous rows because the  other rows have already one non-zero element in one of the previous column (by our hypothesis, it generate already $R^m$). Thus, the number of rows we obtain is $k+m-k+1=m+1$.

	In both cases we have $gen(S_{m+1})=R^{m+1}$. Thus, for any positive integer $m$, $gen(S_m)=R^m$.

\end{proof}


\section{Conclusion}
In this paper, we provided representations of linear and normal linear maps between finite-dimensional Beidleman near-vector spaces, and we derived algorithms for constructing seed sets for such spaces.

Let $R$ be a finite nearfield of size $q$. A rough upper bound on the size of $LC_2(v_1,\ldots,v_k)$ is $q^{q^k}$. We propose the following open question:
		
		\begin{ques}
			Does there exist any example of a near-vector space where
			$I(v_1,\ldots,v_k)> 2$ for some $v_1,\ldots,v_k$?
		\end{ques}
		
		More generally, can we find an  explicit expression, or at least some nontrivial bounds for 
		$I(v_1, \ldots$ $, v_k )$? 

\paragraph{Acknowledgments}
		This work was supported by the Council-funded from the Office of Research Development funding at Nelson Mandela University.

\section{Appendix}
The following algorithm derive the explicit expression of the seed  number $k$ to generate a given $R$-subgroups. Note that  the  Algorithm \ref{algo2}  is a recursive version of Algorithm \ref{firstalgo}.

 \begin{algorithm}
\caption{Create a seed set $V_m$ of $R^m$ and give its number of row}
\label{firstalgo}                          
\begin{algorithmic} 
\REQUIRE $m\in \mathbb{N}$, $R$ is a finite nearfield with unity $1$, the indexing of matrices, vector start from $0$

\ENSURE $V_m$ and $k$ such that $gen(V_m)=R^m$ and $V-m$ has $k$ columns.  
    \IF{$ m= 1$}
        \RETURN  $(1)$
    \ELSE
        
        \STATE $K\leftarrow (1)$ 
        
        \STATE  $S\leftarrow R- \{0,1\}$
        \STATE $k\leftarrow 1$ \COMMENT{number of row}
        \STATE $numcol\leftarrow1$  \COMMENT{ the index of column to fill next time/number of column already filled}
        
        \WHILE{ $numcol<m$} 
            \STATE $k\leftarrow k+1$ \COMMENT{need to add one more row}
            
            \STATE  $Kprim \leftarrow   I_k(R)$ \COMMENT{Identity matrix of order $k$}

            \STATE $numcol\leftarrow column\_number(Kprim) $
            \IF{$numcol= m$}
               \STATE  show $k$
                \RETURN $Kprim$ 
            \ENDIF 
            
            \STATE $K \leftarrow$ sub-matrix of $k$ from column $k-1$ \COMMENT{ Matrices column of K}
            
            \IF{ $K\neq ()$}
                
               \STATE  $K  \leftarrow    \begin{matrix}K\\ \hline \\ 0\, \ldots \, 0 \end{matrix}   $  \COMMENT{add a row of zero in $K$}

                \STATE $K \leftarrow  Kprim|(K)$ \COMMENT{Merge $Kprim$ and $K$ by column}
                
            \ELSE
               \STATE  $K\leftarrow Kprim$
            \ENDIF

            \STATE $numcol\leftarrow  column\_number(K) $

            \IF{ $numcol= m$}
                \RETURN $K$
            \ENDIF
            \STATE $counter\leftarrow 0$
            \STATE $S1 \leftarrow  copy(S)$
            \WHILE{$counter<k-1$}
                \WHILE{ $numcol<m$ and $S1\neq \emptyset$}
                    
                    \STATE $newcolumn\leftarrow  [ ]$

                    \FOR{$i$ in  $[0;counter  ]$}
                        
                        \STATE  append  $[R[2]]$   to newcolumn \COMMENT{fill the first counter rows with 1}
                     \ENDFOR    
                        
                    \FOR{  $i$ in $[ counter+1 ,k -1]$}
                        \STATE new column.append([S1[0]]) \COMMENT{Complete the other rows with S[0]}
                    \ENDFOR

                    \STATE  remove $S1[0]$ from $S1$

                    \STATE $ newcolumn\leftarrow  matrix(newcolumn)$

                    \STATE $K\leftarrow  K.augment(newcolumn)$

                    \STATE $numcol\leftarrow  numcol+1$

                    \IF {$numcol= m$}
                        \STATE show $k$
                        \RETURN $K$
                    \ENDIF
                    \ENDWHILE
                \STATE $counter\leftarrow  counter+1$
                \STATE $S1\leftarrow  copy(S)$
              \ENDWHILE  
           \ENDWHILE   
        \ENDIF     
        \STATE show $k$
        \RETURN  $K$ 
 
\end{algorithmic}
\end{algorithm}

 \begin{algorithm}
\caption{Create a seed set $V_m$ of $R^m$}
 \label{algo2}                          
\begin{algorithmic} 
\REQUIRE $m\in \mathbb{N}$, $R$ is a finite nearfield with unity $1$

\ENSURE $V_m$ such that $gen(V_m)=R^m$

 \STATE $k \leftarrow \left\lceil   \frac{\mathit{|R|} + \sqrt{\mathit{|R|}^{2} + 8 \, {\left(\mathit{|R|} - 2\right)} m - 8 \, \mathit{|R|} + 16} - 4}{2 \, {\left(\mathit{|R|} - 2\right)}}\right\rceil $ \COMMENT{upper bound of the seed number of $R^m$}
\IF{  m=1}
\STATE  \RETURN (1) 

\ELSE

\STATE            $K\leftarrow  0_{k\times m}$  \COMMENT{ The array we will complete, a zero matrix of $k$ rows and $m$ columns. }
\STATE           $ prevm \leftarrow  \frac{( (|R|-2) (k-1-1)+2) (k-1)}{2}$  \COMMENT{The biggest value of $n$ which have a seed set of cardinality $k-1$.  }
\STATE          $V \leftarrow  V_{ prevm}$  


 \STATE \FOR{i in [1;k]}  
                  \STATE    $ K[i][i] \leftarrow 1$ \COMMENT{Copy the identity matrix in $K$  in the  first $k$ columns.}
              \ENDFOR

 \STATE    \FOR{j in [k; prevm]}

    				\FOR{i in  [1,k] }
       				 \STATE  
       				
       				        $ K[i][j+1]\leftarrow V[i][j]$ \COMMENT{Copy the submatrix of $V_{prevm}$ from the $k$th column   in $K$  in the same row number but the column number shifted by $1$. (just after the identity matrix) }
       				        
      				 \STATE 
      				       $  K[k][j+1]\leftarrow 0     $  \COMMENT{ Add $0$ element in the last rows from column $k+1$ to column $prevm$}
 	 		 \ENDFOR

    \ENDFOR

       \COMMENT{Now I will complete the first row with 1}
        \STATE     $S \leftarrow R\ \{0,1\}$  \COMMENT{The non zero element of $R$ different to $1$}
        \STATE   counter=1

       \STATE     $numcol \leftarrow prevm+2$ \COMMENT{The number of the column to be filled, note that we already fill the first $prevm+1$ columns.}
        \STATE   \IF{numcol=m+1}
         \STATE        \RETURN   $K$ \COMMENT{We return $K$ as matrix because we already completed the required number of columns.}
         		\ENDIF

      \STATE   \WHILE{$counter< k$}   
      
        
        \STATE        \FOR{$i$ in $[numcol,m]$}
       \STATE           $K[counter][i] \leftarrow 1 $ \COMMENT{Fill with 1 the row counter}
          			 	 \ENDFOR

          \STATE      \FOR{j in S}
       \STATE           \FOR{ i in  [counter ,k]}
        \STATE                $ K[i][numcol] \leftarrow j $ \COMMENT{ Put a copy of S}
 						\ENDFOR

       \STATE             $ numcol \leftarrow numcol+ 1 $ \COMMENT{Change the number of the column after setting all number in the column}


              \STATE       \IF{ $numcol= m+1$} 
               \STATE           \RETURN $K$  \COMMENT{We return $K$ as matrix because, we manage to complete every columns of our matrix $K$} 
              				\ENDIF
              		\ENDFOR
             \STATE    $counter\leftarrow counter+1$ 
        \ENDWHILE
\ENDIF

\RETURN $K$
 
\end{algorithmic}
\end{algorithm}

\newpage

\bibliography{main}
		\bibliographystyle{plain}
		
\end{document}